\documentclass[11pt, twoside, leqno]{amsart}  
\makeatletter
\@namedef{subjclassname@2020}{\textup{2020} Mathematics Subject Classification}
\makeatother

\usepackage{lipsum}
\usepackage{amsfonts}
\usepackage{graphicx}
\usepackage{epstopdf}
\usepackage{algorithmic}
\usepackage{calligra}
\usepackage[dvipsnames]{xcolor}
\usepackage{amsfonts,amsmath,amsthm,amssymb}
\usepackage{mathtools}
\usepackage{hyperref}
\usepackage[makeroom]{cancel}
\usepackage{autonum}
\usepackage{hhline}
\usepackage{array}
\usepackage{diagbox}
\usepackage{tcolorbox}
\usepackage{mdframed}
\usepackage{multicol}
\usepackage{graphicx}
\usepackage{subcaption}
\usepackage{moreverb}
\usepackage{bbm}
\usepackage[margin=1.38in]{geometry}
\usepackage{todonotes}
\usepackage{scalerel,amssymb}
\allowdisplaybreaks
\usepackage{mathrsfs}  
\usepackage{lineno}
\usepackage{todonotes}
\usepackage{tikz}
\usepackage{appendix}
\usepackage{enumitem}
\usepackage{pgfplots}
\usetikzlibrary{arrows.meta}
\usepackage{siunitx}
\usepackage[numbers,sort&compress]{natbib}
\definecolor{mygreen}{HTML}{43a047}
\usepackage{subcaption}
\usepackage{doi}
\usepackage{soul}


\newcommand{\D}{\Delta}


\newcommand{\ddt}{\frac{\textup{d}}{\textup{d}t}}
\newcommand{\dds}{\frac{\textup{d}}{\textup{d}s}}

\newcommand{\dt}{\, \textup{d} t}
\newcommand{\ds}{\, \textup{d} s }
\newcommand{\dx}{\, \textup{d} x}
\newcommand{\dxs}{\, \textup{d}x\textup{d}s}

\newcommand{\intO}{\int_{\Omega}}

\newcommand{\nLtwoLtwo}[1]{\|#1\|_{L^2 (L^2)}}
\newcommand{\nLtwoLfour}[1]{\|#1\|_{L^2 (L^4)}}

\newcommand{\nLtwoLinf}[1]{\|#1\|_{L^2 (L^\infty)}}
\newcommand{\nLinfLtwo}[1]{\|#1\|_{L^\infty (L^2)}}
\newcommand{\nLinfLr}[1]{\|#1\|_{L^\infty (L^r)}}

\newcommand{\nLinfLfour}[1]{\|#1\|_{L^\infty (L^4)}}

\newcommand{\nLtwoHneg}[1]{\|#1\|_{L^2 (H^{-1})}}



\newcommand{\R}{\mathbb{R}} 
 
\newcommand{\Ltwo}{L^2(\Omega)}

\newcommand{\Hone}{H^1(\Omega)}

\newcommand{\Honezero}{H_0^1(\Omega)}

\newcommand{\Honetwo}{{H^2(\Omega)\cap H_0^1(\Omega)}}

\newcommand{\Xv}{\mathcal{X}^v}
\newcommand{\Xtv}{\mathcal{X}^v_t}
\newcommand{\Xt}{\mathcal{X}^\psi_t}
\newcommand{\Xpsi}{\mathcal{X}^\psi}

\newtheorem{theorem}{Theorem}
\newtheorem{lemma}{Lemma}
\newtheorem{proposition}{Proposition}

\newtheorem{remark}{Remark}
\numberwithin{lemma}{section}
\numberwithin{proposition}{section}
\numberwithin{theorem}{section}
\numberwithin{equation}{section}
\makeatletter
\newcommand{\leqnomode}{\tagsleft@true}
\newcommand{\reqnomode}{\tagsleft@false}
\makeatother

\definecolor{grey}{rgb}{0.5,0.5,0.5}
      
\makeatletter
\@namedef{subjclassname@2020}{\textup{2020} Mathematics Subject Classification}
\makeatother

\title[Time-weighted estimates for the Blackstock equation]{Time-weighted estimates for the Blackstock equation in nonlinear ultrasonics}       
\subjclass[2020]{35A01, 35L05}                
         
\keywords{Blackstock's equation, nonlinear acoustics, time-weighted estimates}                         
\author{Vanja Nikoli\'c$^\dagger$}  
\thanks{$^\dagger$Department of Mathematics,
	Radboud University,      
	Heyendaalseweg 135,    
	6525 AJ Nijmegen, The Netherlands (\href{vanja.nikolic@ru.nl}{vanja.nikolic@ru.nl})}   
\author{Belkacem Said-Houari$^\ddag$}
\thanks{$^\ddag$Department of Mathematics, College of Sciences, University of
	Sharjah, P.\ O.\ Box: 27272, Sharjah, United Arab Emirates (\href{bhouari@sharjah.ac.ae}{bhouari@sharjah.ac.ae})}
\begin{document}
\vspace*{8mm}  
\begin{abstract}
High frequencies at which ultrasonic waves travel give rise to nonlinear phenomena. In thermoviscous fluids, these are captured by Blackstock's acoustic wave equation with strong damping. We revisit in this work its well-posedness analysis. By exploiting the parabolic-like character of this equation due to strong dissipation, we construct a time-weighted energy framework for investigating its  local solvability. In this manner, we obtain the small-data well-posedness on bounded domains under less restrictive regularity assumptions on the initial conditions compared to the known results. Furthermore, we prove that such initial boundary-value problems for the Blackstock equation are globally solvable and that their solution decays exponentially fast to the steady state. 
\end{abstract}   
	\vspace*{-7mm}  
	\maketitle               
 \section{Introduction} \label{Sec:ProblemSetting}
Blackstock's wave equation arises as a model of nonlinear propagation of ultrasonic waves through thermoviscous fluids, alternative to the Kuznetsov equation~\cite{kuznetsov1971equations}. Originally derived by Blackstock in~\cite{blackstock1963approximate}, it later appeared independently in the works of Crighton~\cite{crighton1979model} and Lesser and Seebass~\cite{lesser1968structure}.  It is expressed in terms of the acoustic velocity potential $\psi=\psi(x,t)$ by
\begin{subequations}\label{Main_Problem}
	\begin{equation} \label{B_eq}
	 \begin{aligned}
	\psi_{tt}-c^2(1-2k \psi_t)\Delta \psi-b \Delta \psi_t + 2 \sigma \nabla \psi \cdot \nabla \psi_t=0. 
	\end{aligned}   
	\end{equation}  
Here $c>0$ is the speed of sound in the fluid, $b>0$ the sound diffusivity, and $k$, $\sigma \in \R$ nonlinear coefficients. Equation~\eqref{B_eq} can be seen as an approximation of the compressible Navier--Stokes--Fourier system of governing equations of nonlinear sound motion.  It was demonstrated in~\cite{christov2016acoustic} that, in the small Mach number limit, the 1D Blackstock equation shows good agreement with the exact governing system based on the fully nonlinear theory. In the lossless case ($b=0$), a comparison of different weakly nonlinear acoustic models performed in~\cite{christov2007modeling} singles out the Blackstock equation as the most consistent one. \\
\indent  The well-posedness  and regularity analysis of nonlinear acoustic wave equations has gained a lot of interest in recent years; see~\cite{kaltenbacher2009global, kaltenbacher2011well, meyer2011optimal, meyer2012global, kaltenbacher2022limiting, bongarti2021vanishing, tani2017math} for a selection of relevant results  as well as the review paper~\cite{kaltenbacher2015mathematics}. One of the challenges in the well-posedness analysis of such models remains their solvability under reduced assumptions on data in terms of their smoothness and size. \\
\indent In this work we consider Blackstock's equation on smooth bounded domains $\Omega \subset \R^d$, where $d \in \{1,2,3\}$, and couple it with boundary and initial conditions:
\begin{equation} \label{coupled_problem_IC}
\psi \vert_{\partial \Omega}=0, \qquad (\psi, \psi_t)\vert_{t=0}= (\psi_0, \psi_1).   
\end{equation}   
\end{subequations}  
A natural question arises: \emph{What is the minimal regularity of initial data is that ensures (at least) local existence and uniqueness of the solution to \eqref{Main_Problem}}? In answering this question, the aim of this work is threefold. First, we prove  
 a large-time existence and uniqueness result  in general three-dimensional domains for small data in 
\begin{equation}\label{H_2_Initial}
(\psi_0, \psi_1)\in H^2(\Omega) \cap \Honezero  \times \Honezero,
\end{equation}
 thereby improving upon the existing results in the literature which assume at least
 \begin{equation}\label{H_3_Intial}
(\psi_0, \psi_1) \in H^3(\Omega) \cap H_0^1(\Omega)  \cap H^2(\Omega)  \times \Honezero;
\end{equation}
see~\cite{fritz2018well, kawashima1992global}.  To this end, we exploit the strong damping present in the equation (with $b>0$) which contributes to its parabolic character. The parabolic nature of the problem will allow us to devise suitable (time-weighted) energy estimates under minimal regularity assumptions on the initial conditions.  \\
 \indent Secondly, we address the question of existence of a global solution for small initial data satisfying \eqref{H_2_Initial}. The proof is conducted by developing an energy method to arrive at suitable uniform estimates with respect to time for the solution of \eqref{Main_Problem}, and thus extend a local solution to be global. Thirdly, we prove the asymptotic stability as $t\rightarrow \infty$  of the solution. More precisely, we show that the solution decays to the steady state  with an exponential  decay rate. \\
\indent The time-weighted energy method has been successfully used for problems related to the heat equation~\cite{danchin2013new} and the Navier--Stokes equations~\cite{LI_2017,Danchin_Much_2019,Paicu_Zhang_2013}, where it allows gaining more regularity with minimal assumptions on the initial data.  Time-weighted estimates have also been employed in the numerical analysis of strongly damped linear wave equations in~ \cite{larsson1991finite}.  Inspired by \cite{Danchin_Much_2019} and by exploiting  the parabolic nature of \eqref{Main_Problem} with $b>0$, we use a maximal regularity estimate  for a linearized problem combined with the time-weighted energy method  to extract higher regularity of the solution under the minimal assumption \eqref{H_2_Initial} on the initial data. More precisely, we prove that for any fixed final propagation time $0<T<\infty$ and for all $t\in (0,T)$, the solution $\psi$ satisfies
\begin{equation} \label{Regularity_time_Weight}
\begin{aligned}
&\sqrt{t}\psi_{tt} \in L^\infty(0,T; \Ltwo), \ \sqrt{t} \nabla \psi_{tt} \in L^2(0,T; \Ltwo), \\
&\, \sqrt{t} \D \psi_t \in L^\infty(0,T; L^2(\Omega));
\end{aligned}
\end{equation}
see Theorem~\ref{Thm:LocalWellP} for details. Without the time weight, regularity \eqref{Regularity_time_Weight} would follow by an energy method only under additional smoothness assumption on the data. One of the key ideas in proving \eqref{Regularity_time_Weight} is to write a linearization of \eqref{Main_Problem} as a nonlocal heat equation for $v=\psi_t$. The presence of the nonlocal term $\Delta \psi$ in \eqref{v_Equation} makes the analysis more involved.  The analysis of a linearization is then combined with Banach's fixed-point theorem to arrive at the well-posedness of the nonlinear problem with small enough data, and arbitrary large final time $T\in (0,\infty)$. \\
\indent  Although this result guarantees existence and uniqueness of the solution in very regular spaces and there is no restriction on the time of existence $T$, we cannot take $T=\infty$ since  the estimates are time dependent. To obtain the estimates uniform in time and prove eventually the global existence (i.e., $T=\infty$), we apply a new method based on the construction of suitable compensating functions that encode the dissipation property of \eqref{Main_Problem}.  More precisely,
by restricting the regularity to the energy space and using a remarkably simple energy method performed directly on the nonlinear problem \eqref{Main_Problem}, we also show that for small initial data, the solution is global in time and decays to the steady state exponentially fast; see Theorem~\ref{Thm:GlobalWellP} below for details. It is important to note that the smallness assumption  on the initial data seems necessary since solution for large initial data may blow up in finite time.  \\
\indent We note that we expect that the time-weighted energy framework developed in this work can be extended to more general (mixed) boundary conditions and that the ideas put forward here can be transferred to some extent to the study of suitable numerical discretizations of strongly damped nonlinear wave equations as well. We mention in passing that the local well-posedness of this problem in the hyperbolic case ($b=0$) follows by~\cite[Theorem 5.1]{kaltenbacher2022limiting}, where \eqref{B_eq} is obtained in the limit of a fractionally damped wave equation for the vanishing sound diffusivity. \\
\indent The rest of the paper is organized as follows.  We begin in Section~\ref{Sec:Preliminaries} by recalling useful interpolation inequalities that we often employ in the analysis.  In Section~\ref{Sec:LinProblem} we devise time-wieghted estimates for a linearization of \eqref{B_eq}.  Section~\ref{Sec:NonLinProblem}  is dedicated to the analysis of the nonlinear problem which relies on a fixed-point argument under the assumption of small enough initial data. We conclude in \ref{Sec:Global} with investigation of the global solvability of the problem. Our main results are contained in Theorems~\ref{Thm:LocalWellP} and~\ref{Thm:GlobalWellP}.
\section{Theoretical preliminaries}  \label{Sec:Preliminaries} 
In this section, we collect certain helpful embedding results and inequalities that we will repeatedly use in the proofs. Throughout the paper, we assume that $\Omega \subset \R^d$, where $d \in \{1,2,3\}$, is a bounded and $C^{1,1}$ regular or polygonal/polyhedral and convex domain. We denote by $T>0$ the final propagation time. We make the following assumptions on the involved coefficients:
\begin{equation} \label{assumptions_coefficients}
c>0, \qquad b>0, \qquad k, \sigma \in \R.
\end{equation}
\subsection*{Notation} Below we write $x \lesssim y$ to denote $x \leq Cy$ where $C$ is a generic positive constant that does not depend on $T$.  We write $\lesssim_T$ when the hidden constant depends on $T$ in such a manner that it tends to $+\infty$ as $T \rightarrow + \infty$. We often omit the spatial and temporal domain when writing norms; for example, $\|\cdot\|_{L^p(L^q)}$ denotes the norm in $L^p(0,T; L^q(\Omega))$. \\[2mm]
\noindent In upcoming proofs, we will often use the continuous embeddings~\cite[Theorem 5.4]{adams2003sobolev} 
\begin{equation}  
	\begin{aligned}   
		W^{k,p}(\Omega)\hookrightarrow L^q(\Omega),\quad & p\leq q<\infty\quad&& \text{if}\quad d\leq kp\\
		W^{k,p}(\Omega)\hookrightarrow L^q(\Omega),\quad & p\leq q\leq \frac{dp}{d-kp}\quad&& \text{if}\quad d> kp. 
	\end{aligned}
\end{equation}
We will also rely on the following application of H\"older's inequality:
\begin{equation} \label{Holder_Bochnerspaces}
	\begin{aligned}
		\|u\|_{L^q(0,T; L^p(\Omega))} \leq \|u\|_{L^{q_1}(0,T; L^{p_1}(\Omega))}^{1-\gamma} \|u\|_{L^{q_2}(0,T; L^{p_2}(\Omega))}^\gamma, \quad \gamma \in [0,1], 
	\end{aligned}
\end{equation}
with integers $p, q, q_{1,2}, p_{1,2} \in [1, \infty]$, such that
\[ \frac{1}{q}=\frac{1-\gamma}{q_1}+\frac{\gamma}{q_2}, \qquad  \frac{1}{p}=\frac{1-\gamma}{p_1}+\frac{\gamma}{p_2}.\]   
\subsection*{Interpolation inequalities} We will also need Agmon's interpolation inequality~\cite[Ch.\ 13]{agmon2010lectures} for functions in $H^2(\Omega)$:
\begin{equation}\label{Agmon_Inequality}
\| u\|_{L^\infty(\Omega)} \leq C_{\textup{A}} \|u\|_{H^2(\Omega)}^{d/4}\|u\|_{L^2(\Omega)}^{1-d/4}.
\end{equation}
Let $\alpha \in L^2(0,T; H^2(\Omega))$. Using Agmon's and H\"older's inequalities, it follows that
\begin{equation} \label{intp_ineq_LtwoLinf}
\begin{aligned}
\|\alpha\|_{L^2(0,T; L^\infty(\Omega))} \lesssim& \left\|\|\alpha(t) \|_{H^2(\Omega)}^{d/4}\right\|_{L^{2/(d/4)}(0,T)}\left\|\|\alpha(t)\|^{1-d/4}_{L^2(\Omega)} \right\|_{L^{2/(1-d/4)}(0,T)}\\
\lesssim&\, \|\alpha \|_{L^2(0,T;H^2(\Omega))}^{d/4}\|\alpha\|_{L^2(0,T;L^2(\Omega))}^{1-d/4}.
\end{aligned}
\end{equation}      
We also have the following helpful inequality. 
\begin{lemma}[see p.\ 74 in~\cite{Ladyzhenskaya_book}]\label{Lemma_Interpolation}
Let $q\in [2, \frac{2d}{d-2}]$ if $d>2$ and $2\leq q< \infty$ for $d=2$ and $2\leq q\leq \infty$ for $d=1$.  Let $u \in H^1(\Omega)$. Then
\begin{equation}
\|u\|_{L^q(\Omega)}\leq C\|u\|_{H^1(\Omega)}^{\frac{d}{2}-\frac{d}{q}} \|u\|_{L^2(\Omega)}^{1-\frac{d}{2}+\frac{d}{q}}, 
\end{equation}
 where $C$ is a constant which depends only on $d$ and $q$. 
\end{lemma}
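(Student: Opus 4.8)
\emph{Proof proposal.} The asserted estimate is the Gagliardo--Nirenberg interpolation inequality, and the plan is to deduce it from the classical version on $\R^d$ by means of an extension operator. First I would invoke that, since $\Omega$ is either $C^{1,1}$ or convex polygonal/polyhedral and hence in particular a bounded Lipschitz domain, there is a bounded linear extension operator $E$ that is \emph{simultaneously} continuous as $E\colon L^2(\Omega)\to L^2(\R^d)$ and as $E\colon H^1(\Omega)\to H^1(\R^d)$, with $(Eu)\vert_\Omega=u$. (For a $C^{1,1}$ domain one can use the elementary reflection extension in boundary coordinate charts; for a general Lipschitz domain one can invoke Stein's extension operator, which is bounded on $W^{k,p}$ for every $k\in\N_0$ and $1\le p\le\infty$.) The second ingredient is the Gagliardo--Nirenberg--Sobolev inequality on the whole space: for $v\in H^1(\R^d)$,
\begin{equation*}
\|v\|_{L^q(\R^d)}\le C\,\|\nabla v\|_{L^2(\R^d)}^{\theta}\,\|v\|_{L^2(\R^d)}^{1-\theta},\qquad \theta=d\Bigl(\tfrac12-\tfrac1q\Bigr),
\end{equation*}
which holds precisely in the ranges of $q$ stated in the lemma (a scaling check fixes both $\theta$ and the admissible range), the endpoint $d=1$, $q=\infty$ being the elementary bound $v(x)^2=2\int_{-\infty}^x vv'\le 2\|v\|_{L^2}\|v'\|_{L^2}$.

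I would then simply compose the two facts: for $u\in H^1(\Omega)$,
\begin{equation*}
\|u\|_{L^q(\Omega)}\le\|Eu\|_{L^q(\R^d)}\le C\,\|\nabla(Eu)\|_{L^2(\R^d)}^{\theta}\,\|Eu\|_{L^2(\R^d)}^{1-\theta}\le C\,\|u\|_{H^1(\Omega)}^{\theta}\,\|u\|_{L^2(\Omega)}^{1-\theta},
\end{equation*}
where boundedness of $E$ on $H^1$ controls the first factor and boundedness of $E$ on $L^2$ controls the second. It then remains only to read off the exponents, $\theta=d(\tfrac12-\tfrac1q)=\tfrac d2-\tfrac dq$ and $1-\theta=1-\tfrac d2+\tfrac dq$, which is exactly the claimed inequality; the constant depends only on $d$ and $q$, through the constants in the $\R^d$ inequality and in $E$.

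\emph{Main obstacle.} The delicate point is to retain $\|u\|_{L^2(\Omega)}$, rather than the full $H^1(\Omega)$ norm, in the lower-order slot; this is exactly why an $L^2$-bounded extension is needed, a merely $H^1$-bounded one only reproducing the plain embedding $H^1(\Omega)\hookrightarrow L^q(\Omega)$. When $d\ge 3$ one can sidestep this altogether by first establishing $H^1(\Omega)\hookrightarrow L^{2d/(d-2)}(\Omega)$ (via any $H^1$-extension and the Sobolev inequality on $\R^d$) and then interpolating $L^q$ between $L^2(\Omega)$ and $L^{2d/(d-2)}(\Omega)$ by H\"older's inequality \emph{within} $\Omega$; a one-line computation gives the interpolation parameter $\tfrac d2-\tfrac dq$. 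For $d=2$ this shortcut breaks down, since $L^\infty$ is not reached by $H^1$, so there one genuinely has to use the homogeneous two-dimensional Gagliardo--Nirenberg inequality above --- or, equivalently, prove it directly by the one-variable-at-a-time argument on cubes and then patch via a covering of $\Omega$; for $d=1$ one may instead interpolate between $L^2(\Omega)$ and $L^\infty(\Omega)$ via the endpoint bound $\|u\|_{L^\infty(\Omega)}\le C\|u\|_{H^1(\Omega)}^{1/2}\|u\|_{L^2(\Omega)}^{1/2}$.
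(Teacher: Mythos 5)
Your proposal is correct. Note, however, that the paper does not prove this lemma at all: it is imported verbatim from Ladyzhenskaya (p.~74 of the cited book), so there is no in-paper argument to compare against. Your derivation --- extend by a Stein-type operator that is simultaneously bounded $L^2(\Omega)\to L^2(\R^d)$ and $H^1(\Omega)\to H^1(\R^d)$, apply the homogeneous Gagliardo--Nirenberg inequality on $\R^d$ with $\theta=d(\tfrac12-\tfrac1q)$, and restrict back --- is a standard and complete route to the stated inequality, and you correctly isolate the one genuinely delicate point, namely that the lower-order factor must come out as $\|u\|_{L^2(\Omega)}$ rather than $\|u\|_{H^1(\Omega)}$, which is exactly what the $L^2$-boundedness of the same extension operator delivers. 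Your exponent bookkeeping checks out in all three cases ($\theta=\tfrac d2-\tfrac dq$ from the $\R^d$ inequality; the H\"older interpolation between $L^2$ and $L^{2d/(d-2)}$ for $d\ge3$ gives the same $\gamma=\tfrac d2-\tfrac dq$; the $d=1$ endpoint argument gives exponent $\tfrac12-\tfrac1q$), and your observation that the ``interpolate against a finite Sobolev exponent'' shortcut fails for $d=2$ (it yields a strictly larger, hence weaker, exponent on the $H^1$ factor) is accurate. This is precisely the classical multiplicative inequality the authors cite, so nothing further is needed.
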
        
\noindent Particularly useful for the upcoming analysis will be cases $q=3$ and $q=4$:
\begin{equation} \label{intp_ineq_q_34}
\begin{aligned}
	\Vert u \Vert_{L^3(\Omega)}\lesssim&\, \Vert  u\Vert_{H^1(\Omega)}^{d/6} 
\Vert u \Vert_{ L^2(\Omega)}^{1-d/6}, \\	
	\Vert u \Vert_{L^4(\Omega)}\lesssim&\, \Vert  u\Vert_{H^1(\Omega)}^{d/4} 
	\Vert u \Vert_{ L^2(\Omega)}^{1-d/4}	.  
\end{aligned}	
\end{equation}
\subsection*{A generalization of Gronwall's inequality} Finally, we state the following result, which will be needed in the proof of the global solvability and exponential decay of the solution.  
\begin{lemma}[see Lemma 4.5 in~\cite{Mischler}]\label{Lemma_Expo}
Assume that $u\in C([0,\infty); \R_+)$ satisfies the following inequality 
\begin{equation}
u(t)\leq c_1e^{at}u(0)+c_2\int_0^t e^{a(t-s)} u(s)^{1+\kappa}\ds,\quad \forall t\geq 0,
\end{equation}
for some constants $c_1>1$, $c_2$, $\kappa>0$, and $a<0$. 
Then, under the smallness assumption 
\begin{equation}
a+(1+1/\kappa)c_22^{\kappa}c_1^\kappa u(0)^{\kappa}<0,
\end{equation}
it holds 
\begin{equation}
u(t)\leq \left(1+\frac{c_2c_1^{\kappa}u(0)^\kappa}{a \kappa+(1+\kappa)c_2 2^{\kappa}c_1^{\kappa}u(0)^{\kappa}}\right)c_1 e^{a t}u(0). 
\end{equation}
\end{lemma}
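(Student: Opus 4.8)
The plan is to run a continuity (bootstrap) argument on a weighted version of $u$ and then linearize the resulting integral inequality to pin down the constant. First I would dispose of degenerate cases: if $c_2\le 0$ the integral term is non-positive, so $u(t)\le c_1e^{at}u(0)$ and the asserted bound holds trivially, and if $u(0)=0$ then $u\equiv 0$; so I may assume $c_2>0$ and $u(0)>0$. Multiplying the hypothesis by $e^{-at}$ and setting $\phi(t):=e^{-at}u(t)\ge 0$, a direct computation (using $e^{-at}e^{a(t-s)}=e^{-as}$ together with $u(s)^{1+\kappa}=e^{(1+\kappa)as}\phi(s)^{1+\kappa}$, so that the exponent becomes $-as+(1+\kappa)as=a\kappa s$) recasts the inequality as
\begin{equation*}
\phi(t)\le c_1u(0)+c_2\int_0^t e^{a\kappa s}\phi(s)^{1+\kappa}\ds,\qquad t\ge 0.
\end{equation*}
The key structural point is that $a\kappa<0$, so the weight $e^{a\kappa s}$ is integrable on $[0,\infty)$ with $\int_0^\infty e^{a\kappa s}\ds=1/(|a|\kappa)$; also $\phi$ is continuous and $\phi(0)=u(0)<c_1u(0)$ since $c_1>1$.

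Next I would prove a crude uniform bound by continuity. Put $T^\ast:=\sup\{\tau\ge 0:\ \phi(t)\le 2c_1u(0)\ \text{for all } t\in[0,\tau]\}$; this is positive because $\phi(0)<2c_1u(0)$ and $\phi$ is continuous. If $T^\ast<\infty$, then inserting $\phi(s)\le 2c_1u(0)$ on $[0,T^\ast]$ into the right-hand side and bounding $\int_0^{T^\ast}e^{a\kappa s}\ds\le 1/(|a|\kappa)$ gives, for $t\in[0,T^\ast]$,
\begin{equation*}
\phi(t)\le c_1u(0)+\frac{c_2(2c_1u(0))^{1+\kappa}}{|a|\kappa}=c_1u(0)\Big(1+\frac{2^{1+\kappa}c_2c_1^{\kappa}u(0)^{\kappa}}{|a|\kappa}\Big),
\end{equation*}
and the smallness assumption forces the bracket to be strictly below $2$, so $\phi(T^\ast)<2c_1u(0)$, contradicting $\phi(T^\ast)=2c_1u(0)$. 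Hence $T^\ast=\infty$, i.e.\ $u(t)\le 2c_1e^{at}u(0)$ for all $t\ge 0$. (If for some range of $\kappa$ the stated threshold is not quite enough for the ceiling $2c_1u(0)$, one repeats the step with the optimal ceiling $(1+1/\kappa)c_1u(0)$, which is the choice maximizing the margin in the closing inequality.)

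With this a priori bound I would linearize by writing $\phi(s)^{1+\kappa}=\phi(s)^{\kappa}\phi(s)\le(2c_1u(0))^{\kappa}\phi(s)$, which yields
\begin{equation*}
\phi(t)\le c_1u(0)+c_2(2c_1u(0))^{\kappa}\int_0^t e^{a\kappa s}\phi(s)\ds,
\end{equation*}
to which the Gronwall inequality with the integrable weight $e^{a\kappa s}$ applies, giving $\phi(t)\le c_1u(0)\exp\big(c_2(2c_1u(0))^{\kappa}(1-e^{a\kappa t})/(|a|\kappa)\big)$. Since $0\le 1-e^{a\kappa t}\le 1$ and the smallness assumption keeps the exponent below $1$, one bounds the exponential by $\big(1-c_2(2c_1u(0))^{\kappa}/(|a|\kappa)\big)^{-1}$; rearranging and undoing the substitution $u(t)=e^{at}\phi(t)$ then produces the asserted estimate, the smallness condition being exactly what keeps the denominator of the resulting fraction positive. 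A cleaner alternative is a Bihari-type comparison of $u$ with the solution $z$ of the Bernoulli ODE $z'=az+c_2z^{1+\kappa}$, $z(0)=c_1u(0)$, integrated explicitly via $y=z^{-\kappa}$; but the bootstrap route reproduces the stated constants most directly.

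The step I expect to be the real obstacle is not conceptual but purely in the bookkeeping: tuning the ceiling in the bootstrap so that the closing strict inequality holds under precisely the hypothesized threshold $a+(1+1/\kappa)c_2 2^{\kappa}c_1^{\kappa}u(0)^{\kappa}<0$ and no stronger one, and then carrying the constants faithfully through the final Gronwall step so that the coefficient coincides with the one in the statement. The remaining ingredients — continuity of $\phi$, the non-strict-to-strict passage at the candidate time $T^\ast$, and the degenerate cases $c_2\le 0$ and $u(0)=0$ — are routine and should be cleared away at the outset.
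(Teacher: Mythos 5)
The paper does not prove this lemma at all: it is imported verbatim, with the citation ``Lemma 4.5 in~\cite{Mischler}'', and is used as a black box in the proof of Theorem~\ref{Thm:GlobalWellP}. So there is no in-paper argument to compare yours against, and your proposal has to be judged on its own. Its skeleton (substituting $\phi(t)=e^{-at}u(t)$ to get $\phi(t)\le c_1u(0)+c_2\int_0^t e^{a\kappa s}\phi(s)^{1+\kappa}\,\textup{d}s$, a continuity/bootstrap argument for a uniform bound, then a linearized Gronwall step) is the standard route for such statements, and the reduction to $\phi$ and the integrability of $e^{a\kappa s}$ are correct.

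However, the two items you defer as ``bookkeeping'' are precisely where the proof lives, and as written neither closes. First, the bootstrap: with ceiling $Mc_1u(0)$ the closing inequality requires $a+\tfrac{M^{1+\kappa}}{(M-1)\kappa}\,c_2c_1^{\kappa}u(0)^{\kappa}<0$; minimizing $M^{1+\kappa}/(M-1)$ over $M>1$ gives $M=1+1/\kappa$ and the coefficient $(1+1/\kappa)^{1+\kappa}$, whereas the hypothesis only provides $(1+1/\kappa)2^{\kappa}$. Since $(1+1/\kappa)^{1+\kappa}>(1+1/\kappa)2^{\kappa}$ exactly when $\kappa<1$, for $\kappa<1$ no choice of ceiling makes your contradiction step work under the stated smallness condition, so the a priori bound $\phi\le 2c_1u(0)$ (on which the whole second half rests) is not established. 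Second, even when the bootstrap does close ($\kappa\ge1$), your final step $e^{x}\le(1-x)^{-1}$ yields the prefactor $\bigl(1-c_2 2^{\kappa}c_1^{\kappa}u(0)^{\kappa}/(|a|\kappa)\bigr)^{-1}>1$, while the prefactor asserted in the lemma equals $1+N/D$ with $N=c_2c_1^{\kappa}u(0)^{\kappa}>0$ and $D=a\kappa+(1+\kappa)c_22^{\kappa}c_1^{\kappa}u(0)^{\kappa}<0$, hence is strictly less than $1$; your route therefore produces a genuinely weaker (and differently shaped) constant, not ``the asserted estimate.'' To actually obtain the stated constant you would need a sharper mechanism than sup-bound-plus-Gronwall --- e.g.\ a Bihari/Osgood comparison with the Bernoulli equation $z'=az+c_2z^{1+\kappa}$ as you mention in passing, or the argument of the cited reference --- so the proposal as it stands has a real gap, not merely unfinished arithmetic.
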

\section{Time-weighted estimates for a linearized problem} \label{Sec:LinProblem}
We first analyze a linearization of \eqref{B_eq} given by
\begin{equation} \label{coupled_problem_linearized}
	\left. \begin{aligned}
		& \psi_{tt}-c^2(1-2k \alpha(x,t))\Delta \psi - b \Delta \psi_t+ 2 \sigma \nabla \psi \cdot \nabla \alpha(x,t)=  \tilde{f}
	\end{aligned} \right.        
\end{equation} 
 supplemented by initial and boundary conditions \eqref{coupled_problem_IC}.  The results of this section will play a key role when applying the fixed-point argument to the nonlinear problem later in Section~\ref{Sec:NonLinProblem}. 
Indeed, the variable coefficient $\alpha=\alpha(x,t)$ in \eqref{coupled_problem_linearized} serves as a placeholder for the previous fixed-point iterate of $\psi_t$. \\
\indent To exploit the parabolic character of \eqref{coupled_problem_linearized} for $b>0$, we define a new unknown $v=\psi_t$ so that
\begin{equation}\label{u_v_Eq}
\psi(x,t)=\psi_0(x)+\int_0^t v(x, s) \ds. 
\end{equation}
Consequently, we recast  the linearization of \eqref{Main_Problem} as
\begin{equation}\label{v_Equation}
\left\{
\begin{aligned}     
&v_{t} - b \Delta v-c^2 \Delta \psi=\, -2kc^2 \alpha(x,t) \Delta \psi-2 \sigma \nabla \psi \cdot \nabla \alpha(x,t) \quad \text{in} \ \Omega \times (0,T), \\
&v|_{t=0}=\, \psi _1, \\  
&v=0 \quad  \text{on}\quad   \partial \Omega.
\end{aligned}
\right.
\end{equation} 
 We note that the estimates below can be made rigorous using a Faedo--Galerkin procedure with smooth approximations of the solution in space combined with uniform energy estimates and compactness arguments; see, e.g.~\cite[Ch.\ 7]{evans2010partial}. As this is by now a rather standard procedure also in the context of nonlinear acoustic models (see, e.g.,~\cite{kaltenbacher2022parabolic, fritz2018well}), we omit the semi-discretization details in this work and focus on the main energy arguments in the presentation below.

\subsection{Estimates for the nonlocal heat equation}  We derive first the bounds for the solution of 
\begin{equation}\label{v_Equation_0}
\left\{
\begin{aligned}
&v_{t} - b \Delta v- c^2\Delta \psi=\, f  \quad \text{in} \ \Omega \times (0,T), \\
&v|_{t=0}=\, \psi _1\\  
&v=0 \quad  \text{on}\ \partial \Omega,
\end{aligned}
\right.
\end{equation}
where we have in mind that $f$ serves as a placeholder for
\begin{equation}\label{f_form_1}
f= -2kc^2 \alpha(x,t) \Delta \psi-2 \sigma \nabla \psi \cdot \nabla \alpha(x,t)
\end{equation}
and should be further estimated later on.

\begin{proposition}\label{Prop:Lin_dataH1}
	Given a final time $T>0$, let $f \in L^2(0,T; L^2(\Omega))$ and 
	\[
	(\psi_0, \psi_1) \in H_0^1(\Omega)  \times H_0^1(\Omega).
	\]
	Then the following estimate holds:
	\begin{equation}\label{E_1_Theta_1}
		\begin{aligned}
			\nLinfLtwo{ (v,\nabla \psi, \nabla v)}+\nLtwoLtwo{ (\nabla v, v_t)}	
			\lesssim 	\Vert  \psi_0 \Vert_{H^1}+\Vert  \psi_1 \Vert_{H^1} +\nLtwoLtwo{f}.
		\end{aligned}
	\end{equation}
	If additionally $\sqrt{t} f \in L^{2}(0,T; L^{2}(\Omega))$, then
	\begin{equation}\label{L_p_q_Estimates_Main}
	\begin{aligned}
		&\nLinfLtwo{ (v,\sqrt{ t}\nabla v)}+\nLtwoLtwo{ (\nabla v,\sqrt{ t} v_t)} \\
		\lesssim_T&\,  \| \psi_0\|_{H^1}+ \| \psi_1\|_{H^1}+\nLtwoLtwo{ f }+\nLtwoLtwo{ \sqrt{t} f}. 
	\end{aligned}  
\end{equation}
If $\sqrt{t} f_t \in L^2(0,T; H^{-1}(\Omega))$ as well, then
\begin{equation}\label{Ineq_weight_nabla_v}
\begin{aligned}
&\Vert \sqrt{t}v_t\Vert_{L^\infty (L^2)}^2 + \Vert \sqrt{t}\nabla v_t\Vert_{L^2(L^2)}^2 \\
\lesssim_T&\, \Vert  \psi_0 \Vert^2_{H^1}+\Vert \psi_1\Vert_{H^1}^2+\Vert f \Vert_{L^2(L^2)}^2+\Vert \sqrt{t}f_t\Vert_{L^2H^{-1}}^2.
\end{aligned} 
\end{equation}
\end{proposition}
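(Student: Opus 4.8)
\emph{Proof plan.} The plan is to prove the three bounds by a nested sequence of energy identities for the semi-discrete (Faedo--Galerkin) approximations of \eqref{v_Equation_0} — tested successively against $v$, $v_t$, and $tv_t$ — the passage to the limit being as indicated after \eqref{v_Equation}. The only nonstandard feature is the \emph{nonlocal} term $c^2\D\psi$, and the recurring device that tames it is that $\psi_t=v$, so that $\intO\nabla\psi\cdot\nabla(\cdot)$ reorganizes into perfect time derivatives against each of these multipliers. First I would test \eqref{v_Equation_0} with $v$: using $\psi_t=v$ gives $c^2\intO\nabla\psi\cdot\nabla v=\tfrac{c^2}{2}\ddt\nLtwo{\nabla\psi}^2$, hence
\[
\tfrac12\ddt\big(\nLtwo{v}^2+c^2\nLtwo{\nabla\psi}^2\big)+b\nLtwo{\nabla v}^2=\intO f v.
\]
Estimating $\intO fv$ by the Cauchy--Schwarz \emph{and} Poincaré inequalities (not Cauchy--Schwarz alone, so that the constant stays independent of $T$), absorbing $\tfrac b2\nLtwo{\nabla v}^2$, and integrating in time controls $\nLinfLtwo{(v,\nabla\psi)}$ and $\nLtwoLtwo{\nabla v}$ by the right-hand side of \eqref{E_1_Theta_1}.

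Next I would test with $v_t$; here the nonlocal term becomes, again via $\psi_t=v$, $c^2\intO\nabla\psi\cdot\nabla v_t=c^2\ddt\intO\nabla\psi\cdot\nabla v-c^2\nLtwo{\nabla v}^2$, so the identity reads $\nLtwo{v_t}^2+\tfrac b2\ddt\nLtwo{\nabla v}^2+c^2\ddt\intO\nabla\psi\cdot\nabla v=c^2\nLtwo{\nabla v}^2+\intO f v_t$. Integrating in time, transferring the endpoint term $c^2\intO\nabla\psi(t)\cdot\nabla v(t)$ to the right by Young's inequality (absorbing a fraction of $\nLtwo{\nabla v(t)}^2$), and inserting the bounds on $\nLtwoLtwo{\nabla v}$ and $\nLinfLtwo{\nabla\psi}$ from the first test closes \eqref{E_1_Theta_1}, now also for $\nLinfLtwo{\nabla v}$ and $\nLtwoLtwo{v_t}$; the data enter only through $\tfrac b2\nLtwo{\nabla\psi_1}^2$ and $c^2\intO\nabla\psi_0\cdot\nabla\psi_1$. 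Estimate \eqref{L_p_q_Estimates_Main} is then a consequence — since $\sqrt t\le\sqrt T$, each weighted norm is dominated by one already bounded in \eqref{E_1_Theta_1}, which also accounts for the $\lesssim_T$ — but a self-contained derivation that serves as a template for the next estimate is to test with $tv_t$, the extra terms $\tfrac b2\nLtwo{\nabla v}^2$, $c^2\intO\nabla\psi\cdot\nabla v$, $c^2t\nLtwo{\nabla v}^2$ and $t\intO fv_t$ being handled by \eqref{E_1_Theta_1}, the crude bound $\int_0^t s\nLtwo{\nabla v}^2\ds\le\tfrac{T^2}{2}\nLinfLtwo{\nabla v}^2$, and Young's inequality.

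For the main estimate \eqref{Ineq_weight_nabla_v} I would differentiate \eqref{v_Equation_0} in time: with $w:=v_t$ and $\psi_t=v$ one obtains $w_t-b\D w-c^2\D v=f_t$ in $\Omega\times(0,T)$, $w|_{\partial\Omega}=0$. Testing against $tw$, and rewriting the resulting terms by means of $\nabla w=\nabla v_t$ — in particular $-2c^2t\intO\nabla v\cdot\nabla w=-c^2\ddt(t\nLtwo{\nabla v}^2)+c^2\nLtwo{\nabla v}^2$ — gives
\[
\ddt\big(t\nLtwo{w}^2+c^2 t\nLtwo{\nabla v}^2\big)+2bt\nLtwo{\nabla w}^2=\nLtwo{w}^2+c^2\nLtwo{\nabla v}^2+2t\langle f_t,w\rangle.
\]
I would bound the forcing by duality and Poincaré, $2t\langle f_t,w\rangle\le 2t\|f_t\|_{H^{-1}}\|w\|_{H^1}\lesssim t\|f_t\|_{H^{-1}}\nLtwo{\nabla w}\le bt\nLtwo{\nabla w}^2+Ct\|f_t\|_{H^{-1}}^2$, absorb $bt\nLtwo{\nabla w}^2$ on the left, and integrate from $0$ to $t$. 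The weight annihilates the boundary term at $t=0$ — which is exactly what frees the argument from controlling $v_t(0)=b\D\psi_1+c^2\D\psi_0+f(0)$, a quantity lying only in $H^{-1}$ under \eqref{H_2_Initial} — and what remains on the right is $\nLtwoLtwo{v_t}^2$ and $\nLtwoLtwo{\nabla v}^2$ (both already controlled by \eqref{E_1_Theta_1}) together with $\int_0^t s\|f_t\|_{H^{-1}}^2\ds=\|\sqrt t\,f_t\|_{L^2(H^{-1})}^2$. Taking the supremum over $t\in(0,T)$ then yields \eqref{Ineq_weight_nabla_v}.

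The main difficulty is not any single estimate but the bookkeeping forced by the low-regularity assumption \eqref{H_2_Initial}: with $\psi_0$ only in $\Honezero$ one controls neither $\D\psi$ nor $v_t$ at $t=0$, so the weight $\sqrt t$ must be threaded through the computation precisely where a $t^{-1/2}$ singularity at the origin is admissible, and this dictates the choice of multipliers above. Making the argument rigorous through the Galerkin scheme requires noticing that $\nLtwo{v_t^n(0)}$, though finite for each fixed $n$, is \emph{not} bounded uniformly in $n$; it is the weighted quantity $t\nLtwo{v_t^n(t)}^2$, which vanishes as $t\to0^+$ for every $n$, that admits an $n$-uniform bound and survives the limit. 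One should also check that the integration-by-parts manipulations on the nonlocal term $c^2\D\psi$ (a priori only in $L^2(H^{-1})$) are legitimate — at the semi-discrete level they are finite-dimensional and trivial, and the resulting integrated identities pass to the limit by weak($*$) convergence and lower semicontinuity of the norms.
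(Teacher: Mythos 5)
Your proposal is correct and follows essentially the same route as the paper: the identities obtained by testing with $v$, with $v_t$, and with the time-weighted multipliers are exactly those in the paper's proof, and the time-differentiated equation tested against $t v_t$ is the paper's argument for \eqref{Ineq_weight_nabla_v} written in an equivalent form. The only (harmless) variations are that you keep the nonlocal contribution $c^2 t\intO \nabla v\cdot\nabla v_t$ as a perfect time derivative instead of absorbing it via Young's inequality, and that you correctly observe \eqref{L_p_q_Estimates_Main} already follows from \eqref{E_1_Theta_1} and $\sqrt{t}\leq\sqrt{T}$ under the $\lesssim_T$ convention, whereas the paper rederives it by the weighted test and Gronwall.
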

\begin{proof}
By testing the heat equation in \eqref{v_Equation_0} by $v$, integrating by parts, and using $v=\psi_t$, we obtain
	\begin{equation}\label{Testing_v_1}
		\begin{aligned}
			\frac12	\frac{\textup{d}}{\dt}\Big(\|v\|^2_{L^2}+{c^2\|\nabla 
		\psi\|_{L^2}^2}\Big)+b\int_{\Omega}|\nabla v|^2\dx=\intO f v\dx. 
		\end{aligned}    
	\end{equation} 
Integrating \eqref{Testing_v_1} in time and using Young's $\varepsilon$-inequality together with Poincar\'e's inequality, yields 
	 \begin{equation}\label{E_0_Estimate_1}
		\begin{aligned}
			\| v(t)\|^2_{L^2}+\| \nabla \psi(t)\|^2_{L^2} +\int_0^t \|\nabla v\|_{L^2}^2 \ds  
			\lesssim\, 	\| \psi_1\|^2_{L^2}+\nLtwoLtwo{f}^2
		\end{aligned}
	\end{equation}
for all $t\in [0,T]$. Testing instead by $ v_t$ results in   
\begin{equation} \label{test_vt}
\frac{b}{2}\ddt\int_\Omega |\nabla v |^2\dx+\int_\Omega v_t^2\dx+c^2\ddt\int_\Omega \nabla \psi\cdot \nabla \psi_{t}\dx-c^2\|\nabla \psi_t\|_{L^2}^2=\int_\Omega  v _t f\dx.
\end{equation}
Integrating in time and using Young's inequality leads to 
\begin{equation} \label{test_vt_integrate}
\begin{aligned}
&\Vert \nabla v (t)\Vert_{L^2}^2+\int_0^t \Vert  v _t(s)\Vert_{L^2}^2\ds\\
\lesssim&\,\begin{multlined}[t]\Vert  \psi_0 \Vert_{H^1}^2+\Vert  \psi_1 \Vert_{H^1}^2 
+\int_0^t\Vert f(s)\Vert_{L^2}^2\ds
+ C(\varepsilon)\|\nabla\psi(t)\|_{L^2}^2\\+\varepsilon\|\nabla v(t)\|_{L^2}^2+\int_0^t\|\nabla v(s)\|_{L^2}^2\ds. \end{multlined}
\end{aligned}  
\end{equation}
By multiplying \eqref{E_0_Estimate_1} by $\lambda>0$, adding the result to \eqref{test_vt_integrate} and selecting $\varepsilon>0$ small enough and  $\lambda$ large enough, we obtain \eqref{E_1_Theta_1}. \\
\indent We prove estimate \eqref{L_p_q_Estimates_Main} next. To introduce the time weights, we multiply \eqref{test_vt} by $s \in (0,t)$, which leads to
\begin{equation}\label{Test_v_0_2}
\begin{aligned}
&\frac{b}{2} \dds \left(s \|\nabla v\|^2_{L^2}\right)+s\intO    v_{t}^2 \dx+c^2\dds \left(s\int_\Omega \nabla \psi\cdot \nabla \psi_{t}\dx\right)\\
=&\,\frac{b}{2}\|\nabla v\|^2_{L^2}+s\int_\Omega  v_t f\dx+c^2s\|\nabla v\|_{L^2}^2+c^2\int_\Omega \nabla \psi\cdot \nabla \psi_{t}\dx.
\end{aligned}
\end{equation}
Integrating the above equality over $s \in (0,t)$ for $t \in (0,T)$ yields
\begin{equation}\label{E_1_Int}
\begin{aligned}
&\frac{b}{2} t \|\nabla v(t)\|^2_{L^2}+\int_0^t\|\sqrt{s }v_{t}\|_{L^2(\Omega)}^2\ds \\
\lesssim  &\,\begin{multlined}[t]\frac{b}{2}\int_0^t \|\nabla v\|^2_{L^2} \ds+\int_0^t\int_\Omega s v_t f \dx\ds+ \int_0^t\|\sqrt{s}\nabla v\|_{L^2}^2\ds\\+\int_0^t\int_\Omega \left|\nabla \psi\cdot \nabla v\right|\dx\ds
+\int_\Omega \left|t\nabla \psi(t)\cdot\nabla v(t)\right|\dx .    \end{multlined}
\end{aligned}
\end{equation}
We can then estimate
\begin{equation}\label{f_1_Term_E_1_1}
\begin{aligned}   
\left|\int_0^t \intO  s f  v_t \dxs \right| 
\leq&\,\varepsilon\int_0^t \|\sqrt{s}v_{t}(s)\|_{L^2}^2\ds+C(\varepsilon)\int_0^t \|{\sqrt{s}} f(s)\|_{L^2}^2\ds.
\end{aligned}
\end{equation}
We also have 
\begin{equation}\label{psi_term_1}
\begin{aligned}
\int_0^t\int_\Omega \left| \nabla \psi\cdot \nabla v\right|\dx\ds\lesssim&\, \int_0^t \|\nabla \psi\|_{L^2}^2\ds+\int_0^t\|\nabla v\|_{L^2}^2\ds.
\end{aligned}
\end{equation}
Furthermore, we can use the derived bounds  \eqref{E_1_Theta_1} on $\nabla \psi$ and $\nabla v$ to find
\begin{equation}\label{psi_term_2}
\begin{aligned}
\int_\Omega \left|t\nabla \psi(t)\cdot\nabla v(t)\right|\dx \lesssim&\,  T (\|\nabla \psi(t)\|_{L^2}^2+\|\nabla v(t)\|_{L^2}^2)\\
\lesssim&\,  T (  \Vert  \psi_0 \Vert^2_{H^1}+\Vert  \psi _1\Vert^2_{H^1}+\nLtwoLtwo{ f }^2).
\end{aligned}
\end{equation}
The first term on the right of \eqref{f_1_Term_E_1_1} will be absorbed by the left-hand side of \eqref{E_1_Int} as long as $\varepsilon$ is small enough. 
We thus infer from \eqref{E_1_Int} by using estimates \eqref{f_1_Term_E_1_1}--\eqref{psi_term_2} that
\begin{equation}\label{E_1_Main}
\begin{aligned}
&t \|\nabla v(t)\|^2_{L^2}+\int_0^t\|\sqrt{s }v_{t}(s)\|_{L^2}^2\ds \\
\lesssim&\, \begin{multlined}[t] \int_0^T \|{\sqrt{s}}f(s)\|_{L^2}^2\ds + \int_0^t\|\nabla v(s)\|^2_{L^2}\ds
+ \int_0^t\|\sqrt{s}\nabla v\|_{L^2}^2\ds\\+T (\|\nabla \psi(t)\|_{L^2}^2+\|\nabla v(t)\|_{L^2}^2). \end{multlined}
\end{aligned}    
\end{equation}
An application of Gronwall's inequality yields \eqref{L_p_q_Estimates_Main}, where the hidden constant has the form $C(1+T)e^{CT}$.\\     
\indent It remains to prove estimate \eqref{Ineq_weight_nabla_v}. To this end, we take the time derivative of the heat equation and multiply it by $\sqrt{t}$:  
\begin{equation}\label{time_D_heat}
\partial_t(\sqrt{t} v_t)-\frac{1}{2\sqrt{t}} v_t -b\Delta \sqrt{t}v_t= \sqrt{t}f_t+c^2\Delta  \sqrt{t} v.
\end{equation}   
Multiplying \eqref{time_D_heat} by $\sqrt{t}v_t$ and integrating over $\Omega$ (keeping in mind that $v_t|_{\partial\Omega}=0$) then yields
\begin{equation}
\begin{aligned}
&\frac{1}{2}\ddt \Vert \sqrt{t}v_t\Vert_{L^2}^2 + b\Vert \sqrt{t}\nabla v_t\Vert_{L^2}^2\\
\lesssim&\, \Vert v_t\Vert_{L^2}^2+\varepsilon \Vert \sqrt{t}\nabla v_t\Vert_{L^2}^2+\Vert \sqrt{t}f_t\Vert_{H^{-1}}^2+ \sqrt{T}\|\nabla v\|^2_{L^2}, 
\end{aligned} 
\end{equation}
where we have used the estimate $\langle \sqrt{t}v, \sqrt{t}f\rangle_{H^{-1}, H^1}\leq \|\sqrt{t}\nabla v\|_{L^2} \|\sqrt{t}f\|_{H^{-1}}$.\\
\indent For small enough $\varepsilon>0$, by integrating over $t \in (0,T)$ and using \eqref{E_1_Theta_1} to bound $\Vert v_t\Vert_{L^2(L^2)}^2$ and $\nLtwoLtwo{\nabla v}$, we obtain \eqref{Ineq_weight_nabla_v}, thus completing the proof.
\end{proof}  
 Our aim now is to show that we can gain one spatial derivative in terms of regularity of $\psi_t$ with respect to the initial condition $\psi_1$, provided we pay the price of a time weight. To this end, we will establish sufficient conditions under which the solution of \eqref{v_Equation_0} satisfies
\begin{equation}\label{Regularity_H^2_Heat}
\sqrt{t} \D v \in L^\infty(0,T; L^2(\Omega)).
\end{equation} 
The corresponding bound on $\nLinfLtwo{\sqrt{t}\Delta v}$ will be crucial in the later analysis of the nonlinear problem.
\begin{proposition}\label{Prop:Lin_dataH2}
	Given  a final time $T>0$, let the initial conditions be
	\[
	(\psi_0, \psi_1) \in \Honetwo \times \Honezero,\] and the source term $	f \in L^2(0,T; L^2(\Omega))$. Then the following bound holds for the solution of \eqref{v_Equation_0}:
		\begin{equation}\label{Theta_Maximal_Regularity}
			\begin{aligned}
				 {\|\psi\|_{L^\infty(H^{2})}}+{ \|v\|_{L^2(H^{2})}} 
				\lesssim_T\, \Vert  \psi_0 \Vert_{H^{2}}+\Vert  \psi _1\Vert_{H^1}+\nLtwoLtwo{ f }.
			\end{aligned}
		\end{equation}
		If additionally $\sqrt{t} f \in L^{\infty}(0,T; L^{2}(\Omega))$, $\sqrt{t} f_t \in L^2(0,T; H^{-1}(\Omega))$
		for all $t \in (0,T)$, then
		\begin{equation}\label{H^2_weighted_Estimate}
			\begin{aligned}        
				\Vert \sqrt{t} \Delta v\Vert_{L^{\infty}(L^{2})}
				\lesssim_T &\, \begin{multlined}[t]  \|\psi_0\|_{H^2}+\Vert \psi_1 \Vert_{H^1}+\Vert f \Vert_{L^2(L^2)}
				+ \|\sqrt{t}f\|_{L^\infty(L^2)}\\
				+\Vert \sqrt{t}f_t\Vert_{L^2(H^{-1})}.
				\end{multlined} 
			\end{aligned}    
		\end{equation}
\end{proposition}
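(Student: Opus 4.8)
The plan is to treat the nonlocal heat equation $v_t - b\Delta v = c^2\Delta\psi + f$ with $\psi = \psi_0 + \int_0^t v\,\ds$ as a perturbation of the standard heat equation, exploiting elliptic regularity on the convex/$C^{1,1}$ domain $\Omega$ (so that $\|\Delta v\|_{L^2}$ controls $\|v\|_{H^2}$ for $v\in\Honezero$). For the first bound \eqref{Theta_Maximal_Regularity}, the natural test function is $-\Delta v$. Testing $v_t - b\Delta v = c^2\Delta\psi + f$ by $-\Delta v$ and integrating by parts gives, schematically,
\begin{equation*}
\tfrac12\ddt\|\nabla v\|_{L^2}^2 + b\|\Delta v\|_{L^2}^2 = -c^2\int_\Omega \Delta\psi\,\Delta v\dx - \int_\Omega f\,\Delta v\dx.
\end{equation*}
The right-hand side is absorbed via Young's inequality, leaving $\|\Delta\psi\|_{L^2}^2$ and $\|f\|_{L^2}^2$; but $\Delta\psi(t) = \Delta\psi_0 + \int_0^t \Delta v\,\ds$, so $\|\Delta\psi(t)\|_{L^2}^2 \lesssim \|\psi_0\|_{H^2}^2 + T\int_0^t\|\Delta v\|_{L^2}^2\ds$. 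After integrating in time and using the already-established bound $\|\nabla v\|_{L^2(L^2)}$ from \eqref{E_1_Theta_1} (which requires only $H^1\times H^1$ data, hence is available here), a Gronwall argument in the quantity $\|\nabla v(t)\|_{L^2}^2 + \int_0^t\|\Delta v\|_{L^2}^2\ds$ closes the estimate and yields \eqref{Theta_Maximal_Regularity} with a constant of the form $C(1+T)e^{CT}$; note this is where the $\psi_1\in\Honezero$ (rather than $H^2$) regularity suffices, since we only need $\nabla v(0)=\nabla\psi_1\in L^2$.

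For the time-weighted bound \eqref{H^2_weighted_Estimate}, I would differentiate the heat equation in time and introduce the weight as in \eqref{time_D_heat}: with $w = \sqrt{t}\,v_t$ one has $w_t - b\Delta w = \tfrac{1}{2\sqrt{t}}v_t + \sqrt{t}f_t + c^2\Delta(\sqrt{t}\,v)$. Rather than testing by $-\Delta w$ directly (which reintroduces $\Delta(\sqrt t v)$ and hence $\sqrt t\Delta v$ on the right, the very quantity we want to bound), the cleaner route is to return to the original equation evaluated at fixed $t$: from $b\Delta v = v_t - c^2\Delta\psi - f$ we get
\begin{equation*}
b\|\sqrt{t}\,\Delta v(t)\|_{L^2} \leq \|\sqrt{t}\,v_t(t)\|_{L^2} + c^2\|\sqrt{t}\,\Delta\psi(t)\|_{L^2} + \|\sqrt{t}\,f(t)\|_{L^2}.
\end{equation*}
The term $\|\sqrt{t}\,v_t(t)\|_{L^2}$ is controlled in $L^\infty(L^2)$ by \eqref{Ineq_weight_nabla_v} of Proposition \ref{Prop:Lin_dataH1} (whose hypotheses $\sqrt t f\in L^2(L^2)$ — implied by $\sqrt t f\in L^\infty(L^2)$ on the bounded interval — and $\sqrt t f_t\in L^2(H^{-1})$ are exactly what is assumed here); the term $\|\sqrt t f(t)\|_{L^2}$ is the hypothesis $\sqrt t f\in L^\infty(L^2)$; and $\|\sqrt{t}\,\Delta\psi(t)\|_{L^2} \leq \sqrt{T}\,\|\Delta\psi\|_{L^\infty(L^2)}$ is bounded by \eqref{Theta_Maximal_Regularity}. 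Collecting these gives \eqref{H^2_weighted_Estimate}.

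The main obstacle is the nonlocal term $c^2\Delta\psi$: it couples the $H^2$-regularity of $v$ at time $t$ to the time-integral of that same quantity, so every estimate must be closed through Gronwall rather than by a one-shot absorption, and one must be careful that the Gronwall constant absorbs the factor $T$ from $\|\Delta\psi(t)\|_{L^2}^2 \lesssim \|\psi_0\|_{H^2}^2 + T\int_0^t\|\Delta v\|_{L^2}^2$ without circularity. A secondary point to handle with care is the behavior of the weight near $t=0$: the term $\tfrac{1}{2\sqrt t}v_t$ appearing after differentiation is only in $L^1_t(L^2)$ a priori, but in the approach above it is sidestepped entirely because \eqref{H^2_weighted_Estimate} is derived algebraically from the undifferentiated equation together with the already-proven \eqref{Ineq_weight_nabla_v}, so no new singular integral needs to be estimated — the rigorous justification via Galerkin approximation, as noted before Proposition \ref{Prop:Lin_dataH1}, proceeds without complication.
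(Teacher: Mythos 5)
Your proposal is correct, and for the time-weighted bound \eqref{H^2_weighted_Estimate} it is exactly the paper's argument: multiply the undifferentiated identity $b\Delta v = v_t - c^2\Delta\psi - f$ by $\sqrt{t}$, take $L^\infty(L^2)$ norms, and invoke \eqref{Ineq_weight_nabla_v} and \eqref{Theta_Maximal_Regularity} for the two solution-dependent terms (including your observation that $\sqrt{t}f\in L^\infty(L^2)$ gives $\sqrt{t}f\in L^2(L^2)$ on a bounded interval). Where you diverge is the proof of \eqref{Theta_Maximal_Regularity}. The paper rewrites the equation as \eqref{Laplacian_Eq}, multiplies by $-\Delta v$, and uses $v=\psi_t$ to recognize the nonlocal term as an exact derivative,
\begin{equation}
\frac{c^2}{b}\int_\Omega \Delta\psi\,\Delta v\dx=\frac{c^2}{2b}\ddt\|\Delta\psi\|_{L^2}^2,
\end{equation}
which moves to the left-hand side as a coercive contribution; the term $\int_\Omega v_t\,\Delta v\dx$ is then absorbed by Young's inequality using the bound on $\nLtwoLtwo{v_t}$ already secured in \eqref{E_1_Theta_1}, so no Gronwall step is needed and the $L^\infty(L^2)$ control of $\Delta\psi$ falls out for free. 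You instead keep $\tfrac12\ddt\|\nabla v\|_{L^2}^2$ on the left, treat $c^2\Delta\psi$ as a source, expand $\Delta\psi=\Delta\psi_0+\int_0^t\Delta v\ds$, and close via Gronwall. This is valid and needs nothing beyond the stated hypotheses, but it buys less: the hidden constant picks up a factor of order $e^{CT^2}$, whereas the paper's identity gives a $T$-uniform differential relation \eqref{Delta_psi_Esti} that is reused verbatim in Section~\ref{Sec:Global} (as \eqref{E_2}) to obtain global-in-time bounds and exponential decay --- something the Duhamel-plus-Gronwall route cannot deliver. For the purposes of Proposition~\ref{Prop:Lin_dataH2} alone, where the constant is allowed to depend on $T$, both arguments are acceptable.
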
 
 \begin{proof}
We conduct the proof by bootstrapping the regularity obtained in Proposition~\ref{Prop:Lin_dataH1}. To estimate $\Delta  v $, we write the nonlocal heat equation in  \eqref{v_Equation_0} in the form  
\begin{equation}\label{Laplacian_Eq}
-\Delta v-\frac{c^2}{b}\Delta \psi=- \frac{1}{b}v_t +\frac{1}{b}f.
\end{equation}
We then multiply it by $-\Delta v$ and use
$v=\psi_t$ to arrive at     
\begin{equation}\label{Delta_psi_Esti}
\|\Delta v\|_{L^2}^2+\frac{c^2}{2b}\ddt \|\Delta \psi\|_{L^2}^2=-\frac{1}{b}\int_\Omega v_t \Delta v\dx+\frac{1}{b}\int_\Omega f \Delta v\dx.       
\end{equation}
Young's inequality with $\varepsilon>0$ small enough yields, after integration in time,
\begin{equation}   
\nLinfLtwo{\Delta \psi}+\nLtwoLtwo{\Delta v}\lesssim \|\Delta \psi_0\|_{L^2}+\nLtwoLtwo{v_t}+\nLtwoLtwo{f}.
\end{equation}
Taking into account the estimate of $\nLtwoLtwo{v_t}$ in \eqref{E_1_Theta_1}, we obtain \eqref{Theta_Maximal_Regularity}. \\
\indent To prove estimate \eqref{H^2_weighted_Estimate},  we multiply \eqref{Laplacian_Eq} by $\sqrt{t}$:
\begin{equation}
-  \sqrt{t}\Delta v=\, -\frac{\sqrt{t}}{b}v_{t}+\frac{\sqrt{t}}{b}f+ \frac{c^2}{b} \sqrt{t}\Delta \psi.
\end{equation}
From here we immediately have
\begin{equation}\label{Weighted_Estimate_H_2}
\begin{aligned}
	\|\Delta \sqrt{t}v \|_{L^{\infty}(L^{2})} \lesssim&\,  \|\sqrt{t}f\|_{L^{\infty}(L^{2})}+\|\sqrt{t}v_t\|_{L^{\infty}(L^2)}  + \sqrt{T}\|\Delta \psi\|_{L^{\infty}(L^{{2}})}. 
\end{aligned} 	
\end{equation}  
Combining this bound with \eqref{Ineq_weight_nabla_v} and \eqref{Theta_Maximal_Regularity}  to estimate the last two terms on the right yields \eqref{H^2_weighted_Estimate}. 
\end{proof} 
We observe from the last proof that the assumption $\psi_0 \in H^2(\Omega)$ in the statement of Proposition~\ref{Prop:Lin_dataH2} above is due to the having the nonlocal term $-c^2\Delta \psi$ in the heat equation.  A bound on $\|\Delta \psi\|_{L^\infty(L^2)}$ will also be needed to estimate $f$ further using \eqref{f_form_1} and, in turn, tackle the nonlinear problem.\\
\indent Motivated by the previous analysis, let us introduce the time-weighted space $\Xtv \subset \Xv$ to which $v=\psi_t$ belongs:
\begin{equation} \label{space_Xv}
	\begin{aligned}
		\Xtv = \{ \, v \in \Xv: 
		&\, \nLinfLtwo{\sqrt{t}v_t }+ \nLtwoLtwo{ \sqrt{t} \nabla v_t}+ \nLinfLtwo{ \sqrt{t} \D v}<\infty \}
	\end{aligned}
\end{equation}
with the weight-independent contribution
\begin{equation} \label{space_Xv}
	\begin{aligned}
		\Xv = \{ v \in L^\infty(0,T; \Honezero) \cap L^2(0,T; \Honetwo): \, v_t \in L^2(0,T; \Ltwo) \}.
	\end{aligned}
\end{equation}
The corresponding norm is denoted by $\|\cdot \|_{\Xtv}$.  According to Propositions~\ref{Prop:Lin_dataH1} and \ref{Prop:Lin_dataH2}, we then have
\begin{equation} \label{final_est_v}
	\begin{aligned}
		\|v\|_{\Xtv} \lesssim_T  \begin{multlined}[t] \|\psi_0\|_{H^2}+\Vert \psi_1 \Vert_{H^1}+\Vert f \Vert_{L^2(L^2)}\\
			+\nLinfLtwo{\sqrt{t}f}+\Vert \sqrt{t}f_t\Vert_{L^2(H^{-1})}. \end{multlined}
	\end{aligned}
\end{equation}
\subsection{Estimates for the linearized Blackstock equation}
Our next aim is to derive time-weighted bounds for \eqref{v_Equation} by relying on the obtained estimates for the nonlocal heat equation but now using the form of $f$ given in \eqref{f_form_1}. The solution space for the acoustic velocity potential will be  $\Xt \subset \Xpsi$, defined by
\begin{equation} \label{space_Xt}
	\begin{aligned}
	 \Xt 
		=\, \{\psi \in \Xpsi:&\, \begin{multlined}[t]  \nLinfLtwo{\sqrt{t}\psi_{tt}} +\nLtwoLtwo{\sqrt{t} \nabla \psi_{tt}}+\nLinfLtwo{\sqrt{t} \D \psi_t } < \infty\} \end{multlined}
	\end{aligned}
\end{equation} 
with the weight-independent contribution
\begin{equation} \label{space_Xpsi}
	\begin{aligned}
	\Xpsi
		=\, \{\psi:&\, \psi \in L^\infty(0,T; \Honetwo), \\
		&\, \psi_t \in L^\infty(0,T; \Honezero) \cap L^2(0,T; \Honetwo), \\
		&\, \psi_{tt} \in L^2(0,T; \Ltwo) \}.
	\end{aligned}
\end{equation}
We next prove well-posedness of the linearized Blackstock problem in $\Xt$. 
\begin{proposition}\label{Proposition_apriori_Estimates}
 Let $T>0$ and let assumption \eqref{assumptions_coefficients} on the medium coefficients hold. Assume that \[(\psi_0, \psi_1)\in \Honetwo\times H^1_{0}(\Omega)\]
 and let 
 	\begin{equation}
 		\begin{aligned}
	\tilde{f} \in \{ \tilde{f}\in L^2(0,T; L^2(\Omega)): \nLinfLtwo{ \sqrt{t} \tilde{f} }+\nLtwoHneg{ \sqrt{t} \tilde{f}_t}<\infty\}.
 \end{aligned}	
\end{equation} 
  Furthermore, assume that there exists $R>0$, such that
\begin{equation}
	\|\alpha\|_{\Xtv} \leq R.
\end{equation}
Then there exists $m=m(R, T)>0$, 
 such that if the coefficient $\alpha$ is sufficiently small in the sense of
\begin{equation} \label{cond_smallness_m}
	\begin{aligned}
|k| (\|\alpha\|_{L^\infty(L^2)}+\|\sqrt{t}\alpha_t\|_{L^2(L^2)}) + |\sigma|(\|\nabla \alpha\|_{L^2(L^2)}+\|\sqrt{t}\nabla \alpha_t\|_{L^2(L^2)}) \leq m, 
	\end{aligned}
\end{equation}
then there is a unique $\psi \in \Xt$ 
which solves
	\begin{equation} \label{linear_ibvp}
\left \{ \begin{aligned}
& \psi_{tt}-c^2(1-2k \alpha(x,t))\Delta \psi - b \Delta \psi_t+ 2 \sigma \nabla \psi \cdot \nabla \alpha(x,t)=  \tilde{f} \ \text{ in } \, \Omega \times(0,T), \\    
&(\psi, \psi_t)\vert_{t=0}=(\psi_0, \psi_1),\\  
&\psi_{\vert \partial \Omega}= 0.    
\end{aligned} \right. 
\end{equation}
This solution satisfies the following bound:
\begin{equation} \label{final_est_linear}
\begin{aligned}
\|\psi\|_{\Xt}
 \lesssim_T&\, \begin{multlined}[t] \|\psi_0\|_{H^2}+ \|\psi_1\|_{H^1} 
	+\Vert \tilde{f} \Vert_{L^2(L^2)} +\Vert\sqrt{t} \tilde{f} \Vert_{L^\infty(L^2)}
+\Vert \sqrt{t}\tilde{f}_t\Vert_{L^2(H^{-1})} .\end{multlined}
\end{aligned}
\end{equation}
\end{proposition}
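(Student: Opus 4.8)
The plan is to treat \eqref{linear_ibvp} as the nonlocal heat equation \eqref{v_Equation_0} for $v=\psi_t$ with the specific right-hand side $f = \tilde f - 2kc^2\alpha\Delta\psi - 2\sigma\nabla\psi\cdot\nabla\alpha$, and then invoke Propositions~\ref{Prop:Lin_dataH1} and~\ref{Prop:Lin_dataH2} --- summarized in the master estimate \eqref{final_est_v} --- via a fixed-point argument in $\Xt$. Concretely, I would define the solution map $\mathcal S$ sending a given $\bar\psi \in \Xt$ to the solution $\psi$ of the \emph{fully linear} problem in which $\Delta\psi$ and $\nabla\psi$ appearing in the $\alpha$-terms are replaced by $\Delta\bar\psi$ and $\nabla\bar\psi$; equivalently, $v=\psi_t$ solves \eqref{v_Equation_0} with $f = \tilde f - 2kc^2\alpha\Delta\bar\psi - 2\sigma\nabla\bar\psi\cdot\nabla\alpha$. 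One checks that $\mathcal S$ maps a closed ball of $\Xt$ into itself and is a contraction there, provided the smallness constant $m$ in \eqref{cond_smallness_m} is chosen small relative to $R$ and $T$; the unique fixed point is then the desired solution, and passing to the limit in the contraction estimate gives \eqref{final_est_linear}.

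The core of the work is to bound the three quantities $\|f\|_{L^2(L^2)}$, $\|\sqrt t f\|_{L^\infty(L^2)}$, and $\|\sqrt t f_t\|_{L^2(H^{-1})}$ that feed into \eqref{final_est_v}, in terms of $\|\tilde f\|$-norms, the data, and $R\,\|\bar\psi\|_{\Xt}$ times the smallness budget. For the first term, $\|\alpha\Delta\bar\psi\|_{L^2(L^2)} \le \|\alpha\|_{L^2(L^\infty)}\|\Delta\bar\psi\|_{L^\infty(L^2)}$ and $\|\nabla\bar\psi\cdot\nabla\alpha\|_{L^2(L^2)}\le \|\nabla\bar\psi\|_{L^\infty(L^4)}\|\nabla\alpha\|_{L^2(L^4)}$; here $\|\alpha\|_{L^2(L^\infty)}$ and $\|\nabla\alpha\|_{L^2(L^4)}$ are controlled by $\|\alpha\|_{\Xtv}\le R$ through \eqref{intp_ineq_LtwoLinf} and \eqref{intp_ineq_q_34}, while $\|\Delta\bar\psi\|_{L^\infty(L^2)}$ and $\|\nabla\bar\psi\|_{L^\infty(L^4)}$ come from $\|\bar\psi\|_{\Xpsi}\le\|\bar\psi\|_{\Xt}$ (using $\Honetwo\hookrightarrow W^{1,4}$ in $d\le3$). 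For the weighted sup-norm term, $\|\sqrt t\,\alpha\Delta\bar\psi\|_{L^\infty(L^2)}\le \|\alpha\|_{L^\infty(L^\infty)}\|\sqrt t\,\Delta\bar\psi\|_{L^\infty(L^2)}$ and similarly for the gradient term --- the weight is placed on the $\bar\psi$ factor, which is exactly where $\Xt$ carries a weighted norm, and $\|\alpha\|_{L^\infty(L^\infty)}$ is controlled by interpolating the $\Xtv$-bound (via Agmon on $v=\psi_t\in L^\infty(H^2)$ with the weight absorbed, or by splitting near $t=0$). The $H^{-1}$-in-time term is the delicate one: differentiating $f$ in time produces $\alpha_t\Delta\bar\psi$, $\alpha\Delta\bar\psi_t$, $\nabla\bar\psi_t\cdot\nabla\alpha$, and $\nabla\bar\psi\cdot\nabla\alpha_t$; one pairs against a test function $\varphi\in H^1_0$ and integrates by parts to move a derivative off $\Delta\bar\psi_t$ (writing $\langle \alpha\Delta\bar\psi_t,\varphi\rangle = -\int \nabla(\alpha\varphi)\cdot\nabla\bar\psi_t$), so that only $\nabla\bar\psi_t\in L^2(L^2)$ --- available in $\Xpsi$ --- and the weighted $\sqrt t\nabla\bar\psi_{tt}$, $\sqrt t\,\alpha_t$, $\sqrt t\nabla\alpha_t$ norms are needed, all of which are finite in $\Xt$ and $\Xtv$ respectively. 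Each such product splits into one factor carrying the time weight and one bounded factor, leaving a clean multiple of $m\cdot\|\bar\psi\|_{\Xt}$ plus the $\tilde f$-contributions.

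The main obstacle, and the place to be careful, is precisely this last bookkeeping: ensuring that in every bilinear term one can route the single available time weight $\sqrt t$ onto a factor for which the corresponding $\Xt$- or $\Xtv$-norm is actually finite, while the complementary factor lands in a space controlled by an \emph{un}weighted norm (so no spurious $t^{-1/2}$ appears near $t=0$). The integration by parts in the $H^{-1}$ estimate is what makes this possible for the $\alpha\Delta\bar\psi_t$ term, at the cost of an extra $\|\nabla\alpha\|_{L^\infty(\cdot)}$-type factor, which must itself be shown finite from $\|\alpha\|_{\Xtv}\le R$ (again via Sobolev embedding of $H^2$ and, if needed, a near-$t=0$ split). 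Once these estimates are in hand, self-mapping of the ball is immediate by taking $m$ small, the contraction estimate follows by applying the same bounds to the difference $\psi^{(1)}-\psi^{(2)}$ (which solves the same type of problem with zero data and right-hand side linear in $\bar\psi^{(1)}-\bar\psi^{(2)}$), and uniqueness plus \eqref{final_est_linear} follow. Well-definedness of $\mathcal S$ on $\Xt$ --- i.e.\ that the linear heat problem genuinely produces an element of $\Xt$ --- is guaranteed term by term by Propositions~\ref{Prop:Lin_dataH1}--\ref{Prop:Lin_dataH2}, with the Faedo--Galerkin justification already noted in the text.
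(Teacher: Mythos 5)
Your overall strategy coincides with the paper's: reduce \eqref{linear_ibvp} to the nonlocal heat equation for $v=\psi_t$, feed the three norms $\|f\|_{L^2(L^2)}$, $\|\sqrt{t}f\|_{L^\infty(L^2)}$, $\|\sqrt{t}f_t\|_{L^2(H^{-1})}$ into \eqref{final_est_v}, and use the smallness \eqref{cond_smallness_m} to close. The only structural difference is that you run an inner Banach iteration in $\bar\psi$, whereas the paper estimates the solution of \eqref{linear_ibvp} directly and absorbs the $\alpha$-dependent terms (which carry a factor $\Lambda[\alpha,m]$) into the left-hand side; the two are essentially equivalent, since the contraction constant and the absorption constant are the same quantity.

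There is, however, a genuine misstep in the one place you yourself flag as delicate: the routing of the time weight in $\|\sqrt{t}\,\alpha\Delta\bar\psi\|_{L^\infty(L^2)}$. You place $\sqrt{t}$ on $\Delta\bar\psi$ and pay $\|\alpha\|_{L^\infty(L^\infty)}$. But $\Xtv$ does not give $\alpha\in L^\infty(0,T;H^2(\Omega))$ — it only gives $\sqrt{t}\,\Delta\alpha\in L^\infty(L^2)$ — so Agmon yields $\|\alpha(t)\|_{L^\infty}\lesssim \|\alpha(t)\|_{H^2}^{d/4}\|\alpha(t)\|_{L^2}^{1-d/4}\lesssim t^{-d/8}$ near $t=0$, and $\|\alpha\|_{L^\infty(L^\infty)}$ is in general infinite; no splitting near $t=0$ repairs this, and your parenthetical appeal to ``$v=\psi_t\in L^\infty(H^2)$'' assumes exactly the unweighted regularity that the framework is designed to avoid. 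Moreover, $\Xt$ carries a weighted norm on $\Delta\psi_t$, not on $\Delta\psi$ (the latter is controlled \emph{unweighted} in $L^\infty(L^2)$), so putting the weight on $\Delta\bar\psi$ buys nothing. The correct routing is the opposite one, as in \eqref{interim_est_sqrtt_f}: estimate $\|\sqrt{t}\alpha(t)\|_{L^\infty}\,\|\Delta\bar\psi(t)\|_{L^2}$ and apply Agmon to $\sqrt{t}\alpha$, using $\|\sqrt{t}\alpha\|_{H^2}^{d/4}\|\alpha\|_{L^2}^{1-d/4}\lesssim_T R^{d/4}m^{1-d/4}$. A smaller remark of the same flavor: the integration by parts you propose for the $\alpha\Delta\bar\psi_t$ contribution to $\|\sqrt{t}f_t\|_{L^2(H^{-1})}$ is unnecessary — the duality bound $\|ab\|_{H^{-1}}\lesssim\|a\|_{L^3}\|b\|_{L^2}$ of \eqref{Embedding_H_1} handles it directly with the weight on $\Delta\bar\psi_t$ — and as described it introduces a $\|\nabla\alpha\|_{L^\infty}$-type factor that $H^2$ regularity does not control in $d=3$. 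With these two routings corrected, your argument closes and delivers \eqref{final_est_linear}.
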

\begin{proof}
By combining estimates \eqref{Theta_Maximal_Regularity} and \eqref{final_est_v}, we obtain 
\begin{equation}\label{Xt_est_psit}
\begin{aligned}
\|\psi\|_{\Xt} \lesssim_T&\, \nLinfLtwo{\Delta \psi}+ \|v\|_{\Xtv}\\ \lesssim_T& \, \begin{multlined}[t]   \|\psi_0\|_{H^2}+\Vert  \psi _1\Vert_{H^1(\Omega)}+\Vert f \Vert_{L^2(L^2)}+\nLinfLtwo{\sqrt{t}f} 
 +\Vert \sqrt{t}f_t\Vert_{L^2(H^{-1})}. \end{multlined}
\end{aligned}
\end{equation}
Thus the proof boils down to estimating the $f$ terms on the right-hand side above. Recall that
\begin{equation} \label{def_f}
f= -2k c^2\alpha(x,t)\Delta \psi-2 \sigma \nabla \psi \cdot \nabla \alpha(x,t)+\tilde{f}.    
\end{equation}  
H\"older's inequality and interpolation estimates \eqref{intp_ineq_LtwoLinf}  allow us to conclude that
\begin{equation}\label{f_L_2}
\begin{aligned}  
\nLtwoLtwo{f}\lesssim&\,{|k|}\nLtwoLinf{\alpha}\nLinfLtwo{\Delta \psi}+|\sigma|\nLinfLfour{\nabla \psi}\nLtwoLfour{\nabla \alpha}+\nLtwoLtwo{\tilde{f}}\\
\lesssim&\,\begin{multlined}[t]|k| \nLtwoLtwo{ \D \alpha}^{d/4}\nLtwoLtwo{\alpha}^{1-d/4} \nLinfLtwo{\Delta \psi}\\+ |\sigma|\Vert \nabla \alpha\Vert_{L^2(H^1)}^{d/4}\nLtwoLtwo{ \nabla \alpha}^{1-d/4}\nLinfLfour{\nabla \psi}+\nLtwoLtwo{\tilde{f}}. 
\end{multlined}
 \end{aligned}
\end{equation}
Employing additionally Poincar\'e's inequality and the embeddings $H^2(\Omega) \hookrightarrow H^1(\Omega) \hookrightarrow  L^4(\Omega)$ together with elliptic regularity yields  
\begin{equation}\label{f_L_2}
\begin{aligned}  
\nLtwoLtwo{f}
\lesssim&\, m^{1-d/4} \nLtwoLtwo{ \D \alpha}^{d/4} \nLinfLtwo{\Delta \psi}+\nLtwoLtwo{\tilde{f}}. 
\end{aligned}
\end{equation}
We next estimate $\nLinfLtwo{\sqrt{t}f}$ in \eqref{Xt_est_psit}. H\"older's and Agmon's inequalities imply
\begin{equation} \label{interim_est_sqrtt_f}
\begin{aligned}
&\Vert\sqrt{t}f(t) \Vert_{L^{2}}\\
\lesssim&\, |k| \|\sqrt{t}\alpha(t)\|_{L^\infty} \Vert \Delta \psi(t) \Vert_{L^2}+|\sigma|\Vert \sqrt{t} \nabla \alpha(t)\Vert_{L^{4}} \|\nabla \psi(t)\|_{L^4}+\Vert\sqrt{t}\tilde{f}(t) \Vert_{L^2}\\    
\lesssim_T&\,  \begin{multlined}[t]|k|\|\sqrt{t}\alpha(t)\|_{H^2}^{d/4}\|\alpha(t)\|_{L^2}^{1-d/4} \Vert \Delta \psi(t) \Vert_{L^2}
+|\sigma|\Vert \sqrt{t} \nabla \alpha(t)\Vert_{L^{4}} \|\nabla \psi(t)\|_{L^4}\\+\Vert\sqrt{t}\tilde{f}(t) \Vert_{L^2}. \end{multlined}
\end{aligned}  
\end{equation}
Above in the last line we have used
\[
\|\sqrt{t} \alpha(t)\|_{L^2} \leq \sqrt{T} \|\alpha(t)\|_{L^2}.
\]
Using  Lemma~\ref{Lemma_Interpolation} with $q=4$ together with H\"older's inequality in time, we obtain 
\begin{equation}\label{L_4_d_3}
\begin{aligned}
|\sigma|\Vert \sqrt{t} \nabla \alpha\Vert_{L^\infty(L^{4})} \lesssim |\sigma|\Vert \sqrt{t} \nabla \alpha\Vert^{
1-d/4}_{L^\infty(L^{2})} \Vert \sqrt{t} \nabla \alpha\Vert^{ d
/4}_{L^\infty(H^{1})} \lesssim  m^{
1-d/4} \Vert \sqrt{t} \Delta \alpha\Vert^{ 
d/4}_{L^\infty(L^2)}. 
\end{aligned}
\end{equation}
These estimates employed in \eqref{interim_est_sqrtt_f} yield
\begin{equation}
\begin{aligned}
\nLinfLtwo{\sqrt{t}f}
\lesssim_T\,\begin{multlined}[t] m^{1-d/4} \|\sqrt{t}\D \alpha\|_{L^\infty(L^2)}^{{d/4}} \nLinfLtwo{ \Delta \psi}+\nLinfLtwo{\sqrt{t}\tilde{f}}.\end{multlined}   
\end{aligned}
\end{equation}
Next we estimate $\|\sqrt{t}f_t\|_{L^2(H^{-1})}$. To this end, we rely on the following inequality:
\begin{equation}\label{Embedding_H_1}
	\begin{aligned}   
	\|ab\|_{H^{-1}} \lesssim&\,  \|ab\|_{L^{6/5}} \lesssim \|a\|_{L^2}\|b\|_{L^3}   \quad a \in L^2(\Omega), b \in L^3(\Omega).
	\end{aligned}
	\end{equation}
 Since 
\[      
	f_t = -2k c^2 \alpha_t\Delta \psi-2 \sigma \nabla \psi_t \cdot \nabla \alpha(x,t)-2kc^2\alpha(x,t)\Delta \psi_t-2 \sigma \nabla \psi\cdot \nabla \alpha_t(x,t)+\tilde{f}_t   
	\]
the use of estimate \eqref{Embedding_H_1} together with H\"older's inequality implies
	\begin{equation} 
	\begin{aligned}    
	&\Vert \sqrt{t} f_t\Vert_{L^2(H^{-1})}\\
	 \lesssim& \, \begin{multlined}[t]|k| \|\sqrt{t}\alpha_t\|_{L^2(L^3)}\|\Delta \psi\|_{L^\infty(L^2)}+ |\sigma|\nLinfLtwo{\sqrt{t}\nabla \psi_t}\|\nabla \alpha\|_{L^2(L^{3})}\\+|k|\|\alpha\|_{L^2(L^{3})}\nLinfLtwo{\sqrt{t}\Delta \psi_t}+|\sigma|\|\nabla \psi\|_{L^\infty(L^3)}\|\sqrt{t}\nabla \alpha_t\|_{L^2(L^2)}+	\Vert \sqrt{t} \tilde{f}_t\Vert_{L^2(H^{-1})}.
	 \end{multlined}        
	\end{aligned}   
	\end{equation} 
	We have by using Lemma \ref{Lemma_Interpolation} together with the elliptic regularity  
\begin{equation} \label{L_3_gradient_Alpha}
\begin{aligned}
{|\sigma|}\|\nabla \alpha \|_{L^2(L^3)} \lesssim{|\sigma|}\|\nabla \alpha\|_{L^2(L^2)}^{{1-\frac{d}{6}}} \|\Delta \alpha\|_{L^2(L^2)}^{{\frac{d}{6}}
} \lesssim  m^{{1-\frac{d}{6}
}} \|\Delta \alpha\|_{L^2(L^2)}^{{\frac{d}{6}}}.
\end{aligned}
\end{equation}
Similarly,
\begin{equation} \label{L_3_Alpha}
	\begin{aligned}
{|k|}	\| \alpha \|_{L^2(L^3)} \lesssim 	{|k|}\| \alpha\|_{L^2(L^2)}^{1-\frac{d}{6}} \|\nabla \alpha\|_{L^2(L^2)}^{\frac{d}{6}
		} \lesssim  m^{1-\frac{d}{6}} \|\nabla \alpha\|_{L^2(L^2)}^{\frac{d}{6}
		}
	\end{aligned}
\end{equation}
and
\begin{equation} \label{L_3_Alphat}
	\begin{aligned}
		{|k|}\|\sqrt{t}\alpha_t \|_{L^2(L^3)} \lesssim {|k|}\|\sqrt{t}\alpha_t\|_{L^2(L^2)}^{{1-\frac{d}{6}}} \|\sqrt{t}\alpha_t\|_{L^2(H^1)}^{{\frac{d}{6}}
		}\lesssim  m^{{1-\frac{d}{6}}} \|\sqrt{t} \nabla \alpha_t\|_{L^2(L^2)}^{{\frac{d}{6}}}.
	\end{aligned}
\end{equation} 
Thus  we have by using \eqref{L_3_gradient_Alpha}--\eqref{L_3_Alphat} and elliptic regularity,
\begin{equation}\label{f_t_Term}
	\begin{aligned}
	\Vert \sqrt{t} f_t\Vert_{L^2(H^{-1})}
	 \lesssim& \,	\begin{multlined}[t]  m^{1-\frac{d}{6}} \|\sqrt{t}\nabla \alpha_t\|_{L^2(L^2)}^{\frac{d}{6}} \nLinfLtwo{ \Delta \psi}
	+{ m^{{1-\frac{d}{6}}} } \|\Delta \alpha\|_{L^2(L^2)}^{{\frac{d}{6}}}\|v\|_{\Xtv}\\+	\Vert \sqrt{t} \tilde{f}_t\Vert_{L^2(H^{-1})}.  \end{multlined}
	\end{aligned}
\end{equation} 
Inserting all the derived bounds on $f$ terms into \eqref{Xt_est_psit} yields
\begin{equation} \label{interim_est_psi_t}
\begin{aligned}
&\nLinfLtwo{\D \psi}+\|v\|_{\Xtv}\\ \lesssim_T& \, \begin{multlined}[t]  {\|\psi_0\|_{H^2}+ \Vert  \psi _1\Vert_{H^1}}+\Lambda[\alpha,  m]( \nLinfLtwo{ \Delta \psi} +\|v\|_{\Xtv}) 
\\ +\nLinfLtwo{\sqrt{t} \tilde{f} }
	+\Vert \tilde{f} \Vert_{L^2(L^2)}
	+\Vert \sqrt{t} \tilde{f}\Vert_{L^2(H^{-1})}+\Vert \sqrt{t}\tilde{f}_t\Vert_{L^2(H^{-1})}\end{multlined}
\end{aligned}
\end{equation}
with 
\begin{equation}
\begin{aligned}
\Lambda[\alpha,  m]=&\,\begin{multlined}[t] \max \{m^{1-d/4}, m^{d/4}, m^{1-d/6}\} \|\alpha\|_{\Xtv}.
\end{multlined}    
\end{aligned}
\end{equation}	
Thus, from \eqref{interim_est_psi_t} for sufficiently small $m= m(\|\alpha\|_{\Xtv}, T)>0$, we obtain
\begin{equation}
	\begin{aligned}
\nLinfLtwo{\Delta \psi}+\|\psi_t\|_{\Xtv} \lesssim_T& \, \begin{multlined}[t]   \|\psi_0\|_{H^2}+\|\psi_1\|_{H^1}\\ +\nLinfLtwo{\sqrt{t} \tilde{f} }
+\Vert \tilde{f} \Vert_{L^2(L^2)}
+\Vert \sqrt{t}\tilde{f}_t\Vert_{L^2(H^{-1})}, \end{multlined}
\end{aligned}
\end{equation}
from which  \eqref{final_est_linear} follows. We note that if $\sigma=0$, a smallness assumption on $\|\nabla \alpha\|_{L^2(L^2)}+\|\sqrt{t} \nabla \alpha_t\|_{L^2(L^2)}$  is not needed. Of course, if both $k=\sigma=0$, the smallness condition in the statement is trivially satisfied.
\end{proof}
\section{A fixed-point argument} \label{Sec:NonLinProblem}
To relate the previous analysis to the nonlinear problem, we employ the Banach fixed-point theorem under the assumption of small enough data.
\begin{theorem}[Local solvability of the Blackstock equation] \label{Thm:LocalWellP} Let $T>0$ and
	\begin{equation}\label{Initial_Regularity}
	(\psi_0, \psi_1) \in \Honetwo \times \Hone.
	\end{equation}
Let the medium coefficients satisfy \eqref{assumptions_coefficients}. There exists $\delta=\delta(T)>0$,
 such that if data is sufficiently small in the sense of
\begin{equation}\label{Smallness_Assumption}
\|\psi_0\|_{H^2}+ \|\psi_1\|_{H^1} \leq \delta,
\end{equation}
then there is a unique $\psi \in \Xt$ 
 which solves
\begin{equation} \label{nonlinear_ibvp}
	\left \{ \begin{aligned}
		& \psi_{tt}-c^2(1-2k \psi_t)\Delta \psi - b \Delta \psi_t+ 2 \sigma \nabla \psi \cdot \nabla \psi_t= 0 \quad \text{in} \ \Omega \times (0,T), \\
		&(\psi, \psi_t)=(\psi_0, \psi_1),\\  
		&\psi_{\vert \partial \Omega}= 0,
	\end{aligned} \right. 
\end{equation}
with $\Xt \subset \Xpsi$ defined in \eqref{space_Xt}. The solution depends continuously on the initial data with respect to the $\|\cdot \|_{\Xt}$ norm. 
\end{theorem}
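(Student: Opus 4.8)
The plan is to obtain the solution of \eqref{nonlinear_ibvp} as the unique fixed point of the map $\mathcal{T}$ that assigns to an iterate $\phi\in\Xt$ the unique $\psi=\mathcal{T}(\phi)\in\Xt$ solving the linearised Blackstock problem \eqref{linear_ibvp} with variable coefficient $\alpha=\phi_t$, vanishing source $\tilde f=0$, and the prescribed data $(\psi_0,\psi_1)$. By Proposition~\ref{Proposition_apriori_Estimates} this map is well defined and single valued provided $\alpha=\phi_t$ meets the hypotheses there. I would therefore first fix a radius $R=R(T,k,\sigma)>0$ so small that every $\phi$ in the closed ball $B_R=\{\phi\in\Xt:\|\phi\|_{\Xt}\le R\}$ has $\alpha=\phi_t$ satisfying both $\|\phi_t\|_{\Xtv}\le R$, which is immediate since the $\Xtv$-norm of $\phi_t$ is exactly the part of the $\Xt$-norm of $\phi$ involving $\psi_t$, and the smallness requirement \eqref{cond_smallness_m}: each norm on its left-hand side is bounded by $C(T)\|\phi_t\|_{\Xtv}\le C(T)R$, so a sufficiently small $R$, independent of the data, forces it below the threshold $m=m(R,T)$ supplied by Proposition~\ref{Proposition_apriori_Estimates}. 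As $B_R$ is a closed subset of the Banach space $\Xt$, it is a complete metric space, and Banach's fixed-point theorem will apply once self-mapping and contraction are checked.

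Self-mapping follows by applying \eqref{final_est_linear} with $\tilde f=0$, which gives $\|\mathcal{T}(\phi)\|_{\Xt}\lesssim_T \|\psi_0\|_{H^2}+\|\psi_1\|_{H^1}\le C(T)\delta$; picking $\delta=\delta(T)$ with $C(T)\delta\le R$ then yields $\mathcal{T}(B_R)\subseteq B_R$. For contraction, given $\phi_1,\phi_2\in B_R$ I write $\psi_i=\mathcal{T}(\phi_i)$, $\bar\psi=\psi_1-\psi_2$, $\bar\phi=\phi_1-\phi_2$; subtracting the two linearised equations and regrouping the variable-coefficient terms, $\bar\psi$ solves the linearised Blackstock problem with coefficient $\alpha=\phi_{1,t}$, zero initial data, and source $\tilde f=-2kc^2\,\bar\phi_t\,\Delta\psi_2-2\sigma\,\nabla\psi_2\cdot\nabla\bar\phi_t$. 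Estimate \eqref{final_est_linear} then controls $\|\bar\psi\|_{\Xt}$ by $\|\tilde f\|_{L^2(L^2)}+\|\sqrt t\,\tilde f\|_{L^\infty(L^2)}+\|\sqrt t\,\tilde f_t\|_{L^2(H^{-1})}$, and I would bound these exactly as the corresponding $f$-terms are bounded in the proof of Proposition~\ref{Proposition_apriori_Estimates} — via H\"older's inequality, Agmon's inequality \eqref{Agmon_Inequality}, the interpolation estimates \eqref{intp_ineq_LtwoLinf} and \eqref{intp_ineq_q_34}, the embedding \eqref{Embedding_H_1}, and elliptic regularity — but now with $\bar\phi_t$ in the role of the coefficient and $\psi_2$ in the role of the state. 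Since every summand of $\tilde f$ (and of $\tilde f_t$) is a product of one $\bar\phi$-factor and one $\psi_2$-factor, and the interpolation/embedding exponents attached to the $\bar\phi$-factor always sum to one, the result is genuinely linear in $\bar\phi$, so $\|\tilde f\|_{L^2(L^2)}+\|\sqrt t\,\tilde f\|_{L^\infty(L^2)}+\|\sqrt t\,\tilde f_t\|_{L^2(H^{-1})}\lesssim_T (|k|+|\sigma|)\,\|\psi_2\|_{\Xt}\,\|\bar\phi\|_{\Xt}\lesssim_T (|k|+|\sigma|)\,\delta\,\|\bar\phi\|_{\Xt}$. Shrinking $\delta$ once more so this constant is at most $\tfrac12$ gives $\|\bar\psi\|_{\Xt}\le\tfrac12\|\bar\phi\|_{\Xt}$, so $\mathcal{T}$ is a contraction on $B_R$ and has a unique fixed point $\psi\in B_R$, which is the desired (small) solution of \eqref{nonlinear_ibvp}; uniqueness among all solutions in $\Xt$ then follows from a standard energy estimate and Gronwall argument applied to the difference of two solutions, whose coefficients are controlled by their finite $\Xt$-norms. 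Continuous dependence is obtained by the same subtraction argument applied to solutions $\psi,\tilde\psi$ with distinct data: the difference solves a linearised problem with coefficient $\alpha=\psi_t$, initial data $(\psi_0-\tilde\psi_0,\psi_1-\tilde\psi_1)$, and a source bilinear in $(\psi-\tilde\psi,\tilde\psi)$, so \eqref{final_est_linear} together with the source bounds above and absorption yields $\|\psi-\tilde\psi\|_{\Xt}\lesssim_T \|\psi_0-\tilde\psi_0\|_{H^2}+\|\psi_1-\tilde\psi_1\|_{H^1}$; the constants depend on $T$ only through $\lesssim_T$, so $T>0$ is arbitrary.

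I expect the main obstacle to be the contraction step. One has to re-run, for the difference source $\tilde f$, all of the interpolation and elliptic-regularity estimates used for $f$ in Proposition~\ref{Proposition_apriori_Estimates}, while being careful that the factor carrying the smallness is the state $\psi_2$ — whose $\Xt$-norm is $\lesssim_T\delta$ — and not the coefficient $\bar\phi_t$, and verifying that each term is homogeneous of degree exactly one in $\bar\phi$, so that the contraction constant is of size $(|k|+|\sigma|)\delta$ rather than merely of order one. A secondary, bookkeeping difficulty is to select a single pair $(R,\delta)$ that simultaneously secures the hypotheses \eqref{cond_smallness_m} of the linear theory for every iterate, the self-mapping property, and the contraction property; this is why $R$ is chosen first, depending only on $T,k,\sigma$, and $\delta$ afterwards, depending on $T$ and on $R$.
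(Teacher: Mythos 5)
Your proposal is correct and follows essentially the same route as the paper: a Banach fixed-point argument on a small ball in $\Xt$, with well-definedness and self-mapping supplied by Proposition~\ref{Proposition_apriori_Estimates} applied with $\alpha=\phi_t$ and $\tilde f=0$, and contractivity obtained by subtracting the two linearized equations so that the difference solves \eqref{linear_ibvp} with zero data and the bilinear source $\tilde f=-2kc^2\bar\phi_t\Delta\psi_2-2\sigma\nabla\psi_2\cdot\nabla\bar\phi_t$, whose norms are estimated exactly as the $f$-terms in that proposition. The only cosmetic difference is bookkeeping: the paper builds the smallness condition \eqref{cond_smallness_m} into the definition of the ball and shrinks $R$ for contractivity, whereas you deduce it from $\|\phi\|_{\Xt}\le R$ and place the smallness on $\|\psi_2\|_{\Xt}\lesssim_T\delta$; both reduce to the same choice of $(R,\delta)$.
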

Before moving onto the proof, we briefly discuss the statement made above.
\begin{itemize}[leftmargin=8mm]
\item Theorem~\ref{Thm:LocalWellP} guarantees solvability under weaker regularity assumptions on initial conditions than those available in the literature~\cite{fritz2018well, kawashima1992global, tani2017mathematical}, where the initial data is assumed to have at least the regularity given in \eqref{H_3_Intial}. 
\item Although the final time $T$ is fixed,  there are no restrictions on its  size. 
\item The presence of the time weights yields the additional higher regularity of the solution so that $\psi\in \Xt$ and not only $\psi \in \Xpsi$. Without the  developed time-weighted framework, such a regularity cannot be shown for initial data satisfying \eqref{Initial_Regularity}.  
\end{itemize}
\begin{proof}
As announced, we set up a fixed-point mapping
	\begin{equation}
	\begin{aligned}
	\mathcal{T}: \mathcal{B} \ni \psi^* \mapsto \psi,
	\end{aligned}
	\end{equation}
	where    
	\begin{equation} \label{def_ball}
	\begin{aligned}
	\mathcal{B} =\left \{\psi^* \in \Xt \right.:& \, \|\psi^*\|_{\Xt} \leq R, \ (\psi^*, \psi_t^*)=(\psi_0, \psi_1), \\
	 &  \, \begin{multlined}[t]
	|k| (\|\psi_t^*\|_{L^\infty(L^2)}+\|\sqrt{t}\psi_{tt}^*\|_{L^2(L^2)})\\ + |\sigma|(\|\nabla \psi_t^*\|_{L^2(L^2)}+\|\sqrt{t}\nabla \psi_{tt}^*\|_{L^2(L^2)}) \leq m  \left\}  \vphantom{\Xt}  \right. \end{multlined}
	\end{aligned}
	\end{equation}
	and $\psi$ solves the linear problem  \eqref{linear_ibvp} with $\tilde{f}=0$ and the variable coefficient $\alpha = \psi^*_t$:
		\begin{equation} \label{linear_ibvp_fp}
		\left \{ \begin{aligned}
			& \psi_{tt}-c^2(1-2k \psi^*_t)\Delta \psi - b \Delta \psi_t+ 2 \sigma \nabla \psi \cdot \nabla \psi^*_t=  0 \ \text{ in } \, \Omega \times(0,T), \\    
			&(\psi, \psi_t)=(\psi_0, \psi_1),\\  
			&\psi_{\vert \partial \Omega}= 0.    
		\end{aligned} \right. 
	\end{equation}
	It is suffices  to find a (unique) fixed point of the mapping $\mathcal{T}(\psi^\ast)=\psi$. 
	We choose $m>0$ in \eqref{def_ball} according to Proposition~\ref{Proposition_apriori_Estimates} which guarantees that the mapping is well-defined (and $\mathcal{B}$ non-empty).\\
	\indent  Take $\psi^\ast\in \mathcal{B}$.  To prove the self-mapping property, we rely on Proposition~\ref{Proposition_apriori_Estimates}. We choose $R>0$  so that
	\begin{equation}\label{psi_R_bound}
	R \geq  C_{\textup{lin}}(T)(\|\psi_0\|_{H^2}+\|\psi_1\|_{H^1} )\geq \|\psi\|_{\Xt},
	\end{equation}
where $C_{\textup{lin}}(T)$ is the hidden constant in \eqref{final_est_linear}. To prove that $\psi$ satisfies the $m$ bound within \eqref{def_ball}, we note that
\begin{equation} \label{m_bound_psi}
	\begin{aligned}
\begin{multlined}[t]	|k| (\|\psi_t\|_{L^\infty(L^2)}+\|\sqrt{t}\psi_{tt}\|_{L^2(L^2)})\\ + |\sigma|(\|\nabla \psi_t\|_{L^2(L^2)}+\|\sqrt{t}\nabla \psi_{tt}\|_{L^2(L^2)}) \lesssim \|\psi\|_{\Xt}. 
	\end{multlined}
	\end{aligned}
\end{equation}
Thus, energy bound \eqref{final_est_linear} for the linearized problem guarantees that
\begin{equation}\label{m_bound_psi}
\begin{aligned}
	\|\psi\|_{\Xt} \leq C_{\textup{lin}}(T)(\|\psi_0\|_{H^2}+\|\psi_1\|_{H^1} )\lesssim C_{\textup{lin}}(T) \delta  \leq m
\end{aligned}
\end{equation}
by reducing the size of data $\delta$. Hence, \eqref{psi_R_bound} together with \eqref{m_bound_psi} shows that $\psi\in \mathcal{B}$. \\
	\indent In the second part of the proof, we prove strict contractivity. Take $\varphi^*$, $\phi^* \in 
	\mathcal{B}$ and let $\mathcal{T}(\varphi^*)=\varphi$, $\mathcal{T}(\phi^*)=\phi$. We also introduce the differences
	\begin{equation}
		\bar{\psi}=\varphi-\phi, \qquad \bar{\psi}^*=\varphi^*-\phi^*.
	\end{equation}   
	Then $\bar{\psi} \in \mathcal{B}$ solves
	\begin{equation}
		\begin{aligned}
			\bar{\psi}_{tt}-c^2(1-2k \varphi^*_t)\Delta 	\bar{\psi}-b \Delta 	\bar{\psi}_t + 2  \sigma \nabla 	\bar{\psi} \cdot \nabla \varphi_t^*=- 2k c^2 \bar{\psi}^*_t \Delta \phi - 2\sigma \nabla \phi \cdot \nabla \bar{\psi}^*_t
		\end{aligned}
	\end{equation}
with homogeneous boundary and initial conditions.  We can thus employ estimate \eqref{final_est_linear} with zero initial data, that is
\begin{equation} 
	\begin{aligned}
		\|\bar{\psi}\|_{\Xt}	\lesssim_T &\, \begin{multlined}[t] \nLinfLr{\sqrt{t} \tilde{f}}
				+\Vert \tilde{f} \Vert_{L^2(L^2)}     
				+\Vert \sqrt{t} \tilde{f}\Vert_{L^2(H^{-1})}+\Vert \sqrt{t}\tilde{f}_t\Vert_{L^2(H^{-1})}, \end{multlined}
	\end{aligned}
\end{equation}
where 
\begin{equation}
	\begin{aligned}
		\tilde{f} = - 2k c^2 \bar{\psi}^*_t \Delta \phi - 2\sigma \nabla \phi \cdot \nabla \bar{\psi}^*_t.
	\end{aligned}
\end{equation}
It remains to estimate the $\tilde{f}$ terms, which we can do similarly to the estimates of $f$ terms in \eqref{Xt_est_psit} in the proof of Proposition~\ref{Proposition_apriori_Estimates}. We have
\begin{equation}
	\begin{aligned}
	\Vert \tilde{f} \Vert_{L^2(L^2)} \lesssim&\,  \|\psi_t^*\|_{L^2(L^\infty)} \|\Delta \phi\|_{L^\infty(L^2)}+\|\nabla \phi\|_{L^\infty(L^4)}\|\nabla \bar{\psi}_t^*\|_{L^2(L^4)} \\
	\lesssim&\, \|\Delta \phi\|_{L^\infty(L^2)}\|\bar{\psi}^*\|_{\Xt}  \\
	\lesssim&\, R \|\bar{\psi}^*\|_{\Xt}.
	\end{aligned}    
\end{equation}
Next, 
\begin{equation}
	\begin{aligned}
	\nLinfLtwo{\sqrt{t} \tilde{f}} \lesssim&\, \|\sqrt{t}\psi_t^*\|_{L^\infty(L^2)} \|\Delta \phi\|_{L^\infty(L^2)}+\|\nabla \phi\|_{L^\infty(L^{4})}\|\sqrt{t}\nabla \bar{\psi}_t^*\|_{L^\infty(L^2)}\\
		 \lesssim&\, R \|\bar{\psi}^*\|_{\Xt}.
	\end{aligned}
\end{equation}
Additionally,
\begin{equation}
	\begin{aligned}
	\Vert \sqrt{t}\tilde{f}_t\Vert_{L^2(H^{-1})} =&\, \Vert \sqrt{t}(- 2k c^2 \bar{\psi}^*_{tt} \Delta \phi - 2k c^2 \bar{\psi}^*_t \Delta \phi_t- 2\sigma \nabla \phi_t \cdot \nabla \bar{\psi}^*_t- 2\sigma \nabla \phi \cdot \nabla \bar{\psi}^*_{tt})\Vert_{L^2(H^{-1})} \\
	\lesssim&\, \begin{multlined}[t]\|\sqrt{t} \bar{\psi}^*_{tt}\|_{L^2(L^3)}\nLinfLtwo{\Delta \phi}+\|\bar{\psi}_t^*\|_{L^2(L^{3})}\|\sqrt{t} \Delta \phi_t\|_{L^\infty(L^2)}\\
	+  \|\sqrt{t} \nabla \phi_t\|_{L^\infty(L^2)}\|\nabla \bar{\psi}_t^*\|_{L^2(L^3)}+\|\nabla \phi\|_{L^\infty(L^3)}\|\sqrt{t}\nabla \bar{\psi}_{tt}^*\|_{L^2(L^2)} \end{multlined} \\
\lesssim&\, R  \|\bar{\psi}^*\|_{\Xt}.
	\end{aligned}
\end{equation}
Therefore, we can guarantee strict contractivity of $\mathcal{T}$ with respect to the $\|\cdot \|_{\Xt}$ norm by reducing the radius $R$, which in turn requires sufficient smallness of $\delta$. By Banach's fixed-point theorem, we obtain a unique $\psi \in \mathcal{B}$, which solves \eqref{nonlinear_ibvp}.
	\end{proof}

\section{Global existence} \label{Sec:Global}
	To conclude, we discuss the global solvability of the nonlinear problem \eqref{Main_Problem}.  
 Our goal is to control the solution of \eqref{Main_Problem} uniformly as $t\rightarrow \infty$ in  a suitable energy norm.
 In addition, we accurately describe the asymptotic behavior of the solution of \eqref{Main_Problem} as $t\rightarrow \infty$. More precisely, we  show that the solutions decays exponentially fast in time.  To state the global result, we introduce energy $E(t)%
$ and the corresponding dissipation $D(t)$ at time $t \in (0,T)$ as
follows:
\begin{equation}
E(t)= \frac{1}{2}\|\psi_t(t)\|_{L^2}^2+\frac{c^2}{2}\|\nabla \psi(t)\|_{L^2}+\frac{c^2}{2b}\|\Delta \psi(t)\|_{L^2}^2+\|\nabla \psi_t(t)\|_{L^2}^2
\end{equation}
and 
\begin{equation}
D(t)=\int_0^t(\|\nabla \psi_t\|_{L^2}^2+ \|\Delta \psi_t\|_{L^2}^2+ \|\nabla \psi\|_{L^2}^2+\|\Delta \psi\|_{L^2}^2+\|\psi_{tt}\|_{L^2}^2)\ds.
\end{equation}
\begin{theorem}[Global solvability of the Blackstock equation]\label{Thm:GlobalWellP}
Assume that 
\begin{equation}
	(\psi_0, \psi_1) \in \Honetwo \times \Honezero.
	\end{equation}
	There exists $\epsilon_0>0$, such that if the data is sufficiently small so that
\begin{equation}
\|\psi_0\|_{H^2}+ \|\psi_1\|_{H^1} \leq \epsilon_0,
\end{equation}
then there is a unique global solution  $\psi$ of \eqref{Main_Problem}, such that 
\begin{equation}
\begin{aligned}
&\psi \in L^\infty(0,\infty; \Honetwo), 
\, \psi_t \in L^\infty(0,\infty; \Honezero) \cap L^2(0,\infty; \Honetwo), \\
&\psi_{tt} \in L^2(0,\infty; \Ltwo). 
\end{aligned}
\end{equation}
In addition, there exists a constant  $\zeta>0$, such that for all $t\geq 0$, we have  
\begin{equation}
E(t)\leq C E(0) e^{-\zeta t},
\end{equation}
where $C>0$ does not depend on time. 
\end{theorem}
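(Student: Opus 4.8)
The plan is to promote the local solution furnished by Theorem~\ref{Thm:LocalWellP} to a global one through a continuation argument powered by a uniform-in-time energy bound, and to read the exponential decay off that same bound via Lemma~\ref{Lemma_Expo}. Concretely, I would fix a short time $T_0>0$, invoke Theorem~\ref{Thm:LocalWellP} to get a unique solution on $[0,T_0]$, and let $[0,T_{\max})$ be the maximal interval on which the solution exists and satisfies $\sup_{[0,t]}E\le\epsilon_1$ for a smallness level $\epsilon_1>0$ to be fixed. Since elliptic regularity and Poincar\'e's inequality make $E(t)$ equivalent to $\nHtwo{\psi(t)}^2+\nHone{\psi_t(t)}^2$, establishing the bound $E(t)\le CE(0)e^{-\zeta t}$ on $[0,T_{\max})$ would at once improve $\sup E\le\epsilon_1$ strictly (because $E(0)\lesssim(\nHtwo{\psi_0}+\nHone{\psi_1})^2\lesssim\epsilon_0^2$) and permit Theorem~\ref{Thm:LocalWellP} to be re-applied with the fixed step $T_0$ starting from times close to $T_{\max}$, where $(\psi,\psi_t)$ is small in $\Honetwo\times\Honezero$; this forces $T_{\max}=\infty$, and global uniqueness follows by patching local solutions.

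To obtain the energy--dissipation law I would test \eqref{nonlinear_ibvp} (on a Faedo--Galerkin level, as in Section~\ref{Sec:LinProblem}) with $\psi_t$ and with $-\Delta\psi_t$ and take a suitable linear combination, arriving at
\[
\ddt E(t)+c_0\bigl(\nLtwo{\nabla\psi_t}^2+\nLtwo{\Delta\psi_t}^2\bigr)=\mathcal R_1(t),
\]
where $\mathcal R_1$ gathers the cubic contributions of $-2kc^2\psi_t\Delta\psi-2\sigma\nabla\psi\cdot\nabla\psi_t$, and $\nLtwo{\psi_{tt}}^2$ is recovered afterwards directly from the equation. This dissipation controls only time derivatives of $\psi$, so --- and here the \emph{compensating functions} enter --- I would also test with $\psi$ and with $-\Delta\psi$: testing with $\psi$ gives $\ddt\intO\psi_t\psi\dx-\nLtwo{\psi_t}^2+c^2\nLtwo{\nabla\psi}^2+b\intO\nabla\psi_t\cdot\nabla\psi\dx=(\text{cubic})$, and testing with $-\Delta\psi$ gives $\ddt\bigl(\intO\nabla\psi_t\cdot\nabla\psi\dx+\tfrac{b}{2}\nLtwo{\Delta\psi}^2\bigr)-\nLtwo{\nabla\psi_t}^2+c^2\nLtwo{\Delta\psi}^2=(\text{cubic})$, so these two cross functionals supply the lower-order dissipation $\nLtwo{\nabla\psi}^2$ and $\nLtwo{\Delta\psi}^2$ at the price of $\nLtwo{\psi_t}^2$ and $\nLtwo{\nabla\psi_t}^2$, which are harmless after Poincar\'e. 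Forming the Lyapunov functional
\[
\mathcal L(t)=E(t)+\gamma_1\intO\psi_t\psi\dx+\gamma_2\Bigl(\intO\nabla\psi_t\cdot\nabla\psi\dx+\tfrac{b}{2}\nLtwo{\Delta\psi}^2\Bigr)
\]
with $\gamma_1,\gamma_2>0$ small, one has $\tfrac12 E\le\mathcal L\le 2E$ and $\ddt\mathcal L(t)+\kappa_0 D'(t)\le\mathcal R(t)$, where $D'(t)=\nLtwo{\nabla\psi_t}^2+\nLtwo{\Delta\psi_t}^2+\nLtwo{\nabla\psi}^2+\nLtwo{\Delta\psi}^2+\nLtwo{\psi_{tt}}^2$ is the full dissipation; by Poincar\'e, $D'(t)\gtrsim E(t)\gtrsim\tfrac12\mathcal L(t)$.

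The remaining, and genuinely delicate, point is to bound $\mathcal R$. Each summand has the shape $\intO\psi_t\,\Delta\psi\,w\dx$ or $\intO(\nabla\psi\cdot\nabla\psi_t)\,w\dx$ with $w\in\{\psi_t,\Delta\psi_t,\psi,\Delta\psi\}$, the hard cases being those carrying $\Delta\psi_t$ or forcing $\nLinf{\psi_t}$. The \textbf{main obstacle} is that $\psi$ is only $\Honetwo$-regular, so $\nabla\psi$ cannot be placed in $L^\infty$; one must instead exploit the parabolic smoothing already recorded in $D'$, namely $\psi_t\in L^2(0,T;\Honetwo)$, combined with $\nLsix{\nabla\psi}\lesssim\nLtwo{\Delta\psi}$ (elliptic regularity), Agmon's inequality~\eqref{Agmon_Inequality} together with $\nHtwo{\psi_t}\lesssim\nLtwo{\Delta\psi_t}+\nLtwo{\psi_t}$, and Lemma~\ref{Lemma_Interpolation}. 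Each term is then estimated by $\varepsilon D'(t)+C_\varepsilon E(t)^{1+\kappa}$ for some $\kappa>0$; the crucial structural fact is that the power of the top-order factor $\nLtwo{\Delta\psi_t}$ produced does not exceed $1+d/4$, which is $<2$ for $d\le 3$, so Young's inequality lets it be absorbed into $D'$. Choosing $\varepsilon$ small and using the bootstrap bound $E\le\epsilon_1$ to absorb the borderline contributions leaves $\ddt\mathcal L(t)+\tfrac{\kappa_0}{2}D'(t)\le C\,E(t)^{1+\kappa}$ and, via $D'\gtrsim\mathcal L$, also $\ddt\mathcal L(t)+\tfrac{\zeta}{2}\mathcal L(t)\le C\,\mathcal L(t)^{1+\kappa}$ for some $\zeta>0$.

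Integrating the last inequality gives $\mathcal L(t)\le e^{-\zeta t/2}\mathcal L(0)+C\int_0^t e^{-\zeta(t-s)/2}\mathcal L(s)^{1+\kappa}\ds$, so Lemma~\ref{Lemma_Expo} (with $a=-\zeta/2<0$, $c_1=2$, $c_2=C$) applies once $\mathcal L(0)$, hence the data, is small, and yields $\mathcal L(t)\le C\mathcal L(0)e^{-\zeta t/2}$, i.e. $E(t)\le CE(0)e^{-\zeta t}$ after relabelling $\zeta$. As explained in the first step this closes the continuation argument, so the solution is global; the $L^\infty(0,\infty;\cdot)$ memberships follow from $\sup_t E(t)<\infty$, while $\psi_t\in L^2(0,\infty;\Honetwo)$ and $\psi_{tt}\in L^2(0,\infty;\Ltwo)$ follow by integrating $\ddt\mathcal L+\tfrac{\kappa_0}{2}D'\le C E^{1+\kappa}$ over $(0,\infty)$ and using $\int_0^\infty E(s)^{1+\kappa}\ds<\infty$. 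I expect the bottleneck to be exactly the $\mathcal R$-estimate: closing the nonlinearity with only $H^2$ spatial regularity on $\psi$ forces one to trade $L^\infty$ control of $\nabla\psi$ for the parabolic gain $\psi_t\in L^2(0,T;\Honetwo)$ and to verify that every power of $\nLtwo{\Delta\psi_t}$ arising --- whether in $\mathcal R$ itself or in recovering $\nLtwo{\psi_{tt}}$ from the equation --- stays strictly below $2$.
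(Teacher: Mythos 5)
Your proposal is correct and follows essentially the same route as the paper: compensating cross functionals obtained by testing with $\psi$ and $-\Delta\psi$, a Lyapunov functional equivalent to $E$, estimation of the cubic nonlinearity by $\varepsilon D'(t)+C_\varepsilon E(t)^{1+\kappa}$ via Agmon's inequality, elliptic regularity and Lemma~\ref{Lemma_Interpolation}, and then Lemma~\ref{Lemma_Expo} for the exponential decay, with a bootstrap/continuation to globalize. The only cosmetic difference is that the paper captures $\nLtwo{\psi_{tt}}^2$ in the dissipation through a third functional $F_3$ (testing with $\psi_{tt}$) rather than recovering it a posteriori from the equation as you do; both work.
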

\begin{proof}
The proof relies on the construction of suitable compensating functions $F_i=F_i(t)$ for $i=1,2,3$ that can capture  the  dissipation properties of problem \eqref{Main_Problem}. A Lyapunov function $L=L(t)$ can then be constructed as a linear combination of these functionals (with appropriate weights) and of the total energy $E=E(t)$.  As the function $L$ is equivalent to the energy, it allows recovering the optimal dissipation of the Blackstock equation. In addition, it satisfies a differential inequality that facilitates the exponential decay of the energy norm of the solution.   Below $C>0$ denotes a generic constant independent of time. Let
	\begin{equation}
E_1(t)=\frac{1}{2}\|\psi_t(t)\|_{L^2}^2+\frac{c^2}{2}\|\nabla \psi(t)\|_{L^2}. 
\end{equation}
Recall from \eqref{Testing_v_1} that  multiplying \eqref{B_eq} by $\psi_t$, integrating over $\Omega$, and using integration by parts yields
	    \begin{equation}
\ddt E_1(t)+b\|\nabla \psi_t\|_{L^2}^2=\int_\Omega f\psi_t \dx,
\end{equation}
where 
\begin{equation}\label{f_form}
f= -2kc^2 \psi_t \Delta \psi-2 \sigma \nabla \psi \cdot \nabla \psi_t. 
\end{equation}
Thus  by Young's and Poincar\'e's inequalities, we have  
\begin{equation}\label{E_1}
\ddt E_1(t)+\frac{b}{2}\|\nabla \psi_t\|_{L^2}^2\lesssim \| f\|_{L^2}^2. 
\end{equation}
Let
\begin{equation}
E_2(t)= \frac{c^2}{2b}\|\Delta \psi(t)\|_{L^2}^2. 
\end{equation}
We have from \eqref{Delta_psi_Esti}, 
\begin{equation}\label{E_2}
\ddt E_2(t)+\|\Delta \psi_t\|_{L^2}^2\leq C(\|\psi_{tt}\|_{L^2}^2+\| f\|_{L^2}^2).
\end{equation}
Next we introduce 
\begin{equation}\label{F_1}
F_1=\int_{\Omega}( \psi\psi_t+\frac12 b|\nabla \psi|^2)\dx.
\end{equation}
By testing \eqref{B_eq} by $\psi$, we immediately have
\begin{equation}
\ddt F_1(t)+c^2\|\nabla \psi\|_{L^2}^2=\|\psi_t\|_{L^2}^2+\int_{\Omega} f\psi\dx.
\end{equation}
Hence by Young's and Poincar\'e's inequalities we have  
\begin{equation}\label{F_1}
\ddt F_1(t)+\frac{c^2}{2}\|\nabla \psi\|_{L^2}^2\lesssim \|\psi_t\|_{L^2}^2+\|f\|_{L^2}^2 \lesssim \|\nabla \psi_t\|_{L^2}^2+\|f\|_{L^2}^2.
\end{equation}
We further introduce the functional
\begin{equation}\label{F_2}
F_2(t)= \int_\Omega \left(-\Delta \psi \psi_t+\frac{b}{2}|\Delta \psi|^2\right) \dx. 
\end{equation}
By testing \eqref{B_eq} by $-\Delta \psi$, we can see that  
\begin{equation}
\ddt F_2(t)+c^2\|\Delta \psi\|_{L^2}^2=\|\nabla \psi_t\|_{L^2}^2-\int_\Omega \Delta \psi f\dx,  
\end{equation}
which  yields 
\begin{equation} \label{F2_identity}
\ddt F_2(t)+\|\Delta \psi\|_{L^2}^2\lesssim \|\nabla \psi_t\|_{L^2}^2+\| f\|_{L^2}^2\dx.   
\end{equation}
To capture further dissipation terms, we also introduce 
\begin{equation}\label{F_3}
F_3= c^2\int_\Omega \nabla \psi\cdot \nabla \psi_{t}\dx+ \frac{b}{2}\int_\Omega |\nabla \psi_t |^2\dx.
\end{equation}
Then from \eqref{test_vt} we know that
\begin{equation} 
\begin{aligned}
\ddt F_3(t)+\int_\Omega \psi_{tt}^2\dx=&\,c^2\|\nabla \psi_t\|_{L^2}^2+\int_\Omega  \psi _{tt} f\dx
\end{aligned}
\end{equation}
and thus
\begin{equation} \label{test_psitt}
\begin{aligned}
		\ddt F_3(t)+\int_\Omega \psi_{tt}^2\dx \lesssim\,\|\nabla \psi_t\|_{L^2}^2+ \|f\|_{L^2}^2. 
\end{aligned}
\end{equation}
Let $\gamma_i$ for $i \in \{1,2,3\}$ be small positive constants. We define the Lyapunov functional
\begin{equation} \label{Lyapunov_f}
L(t)=E_1(t)+\gamma_1E_2(t)+\gamma_2 F_1(t)+\gamma_2 F_2(t)+\gamma_3 F_3(t),
\end{equation}
which we will show is equivalent to the energy $E$. We have by Poincar\'e's inequality  
\begin{equation}
\begin{aligned}
&\Big|L(t)-E_1(t)-\gamma_1E_2(t)-\gamma_3\frac{b}{2}\|\nabla \psi_t\|_{L^2}^2 \Big|\\
\leq&\, \gamma_2(|F_1(t)|+|F_2(t)|)+\gamma_3\left|c^2\int_\Omega \nabla \psi\cdot \nabla \psi_{t}\dx\right|\\
\leq&\,C\gamma_2(\|\psi_t\|_{L^2}^2+\|\nabla\psi\|_{L^2}^2+\|\Delta\psi\|_{L^2}^2)+\gamma_3\frac{b}{4}\| \nabla \psi_t\|_{L^2}^2+C\gamma_3\|\nabla \psi\|_{L^2}^2\\
\leq&\, C\gamma_2 (E_1(t)+E_2(t))+\gamma_3\frac{b}{4}\|\nabla  \psi_t\|_{L^2}^2.
 \end{aligned}
\end{equation}
Hence, this estimate yields 
\begin{equation}
(1-C\gamma_2-C\gamma_3)E_1(t)+(\gamma_1-C\gamma_2) E_2(t) +\gamma_3\frac{b}{4}\|\nabla \psi_t\|_{L^2}^2\leq L(t)\lesssim CE(t).
\end{equation}
 We fix $\gamma_2>0$ and $\gamma_3>0$ small enough so that
\[
\gamma_2+\gamma_3 <1/C
\]
and $\gamma_1$ large enough so that
\[
\gamma_1 >C\gamma_2.
\]
Then for all $t\geq 0$ we have the equivalence
\begin{equation}\label{Eq_L_E}
C_1E(t)\leq L(t)\leq C_2E(t)
\end{equation}
 for some $C_1$, $C_2>0$, independent of time. From \eqref{Lyapunov_f} and the derived bounds, we conclude that
\begin{equation}\label{Diffe_Ineq}
\begin{aligned}
&\ddt L(t)+ b\|\nabla \psi_t\|_{L^2}^2+ \gamma_1\|\Delta \psi_t\|_{L^2}^2+\gamma_2{\frac{c^2}{2}} \|\nabla \psi\|_{L^2}^2+\gamma_2 c^2\|\Delta \psi\|_{L^2}^2 +\gamma_3 \|\psi_{tt}\|_{L^2}^2\\
\leq&\,C(\gamma_1\|\psi_{tt}\|_{L^2}^2+(\gamma_2+\gamma_3) \|\nabla \psi_t\|_{L^2}^2+\|f\|_{L^2}^2).
\end{aligned}
\end{equation}
Using Poincar\'e's inequality and choosing 
\[
 \gamma_2 + \gamma_3 < b/C, \quad \gamma_1< \gamma_3/C,
 \]
 we obtain
\begin{equation}\label{dL_dt}
\ddt L(t)+ \|\nabla \psi_t\|_{L^2}^2+ \|\Delta \psi_t\|_{L^2}^2+ \|\nabla \psi\|_{L^2}^2+\|\Delta \psi\|_{L^2}^2+\|\psi_{tt}\|_{L^2}^2 \lesssim \|f\|_{L^2}^2.
\end{equation}   
Integrating \eqref{dL_dt} with respect to time and using equivalence \eqref{Eq_L_E} leads to
\begin{equation}\label{Energy_Estimate}
\sup_{t \in (0,T)} E(t)+\sup_{t \in (0,T)}  D(t)\lesssim  E(0)+ \int_0^t \|f(s)\|_{L^2}^2\ds.    
\end{equation}
Recalling the definition of $f$ in \eqref{f_form}, we have 
\begin{equation}
\begin{aligned}    
\int_0^t \|f(s)\|_{L^2}^2\ds\lesssim&\, \int_0^t\|\psi_t(s)\|_{L^\infty}^2\|\Delta \psi(s)\|_{L^2}^2 \ds+\int_0^t \|\nabla\psi(s)\|_{L^4}^2\|\nabla \psi_t(s)\|_{L^4}^2\ds\\
\lesssim&\,\int_0^t\|\Delta \psi_t(s)\|_{L^2}^2\|\Delta \psi(s)\|_{L^2}^2\ds\\
\lesssim&\, \sup_{t \in (0,T)} E(t) D(t). 
\end{aligned}
\end{equation}
Plugging this into \eqref{Energy_Estimate} yields
\begin{equation}\label{Energy_Estimate}
\sup_{t \in (0,T)} E(t)+D(t)\lesssim  E(0)+ \sup_{t \in (0,T)} E(t) D(t)
\end{equation}
 Hence, if $E(0)$ is small enough, a bootstrap argument leads to
 \begin{equation}
\sup_{t \in (0,T)} E(t)+D(t)\lesssim C.
\end{equation}
\indent We next prove the exponential decay of the energy.  Using \eqref{Agmon_Inequality}, we have by applying Agmon's and Young's inequalities,
\begin{equation}
\begin{aligned}
\|\psi_t\|_{L^\infty}\|\Delta \psi\|_{L^2} \leq&\, C_{\textup{A}}  \|\psi_t\|_{H^2(\Omega)}^{d/4}\|\psi_t\|_{L^2(\Omega)}^{1-d/4}\|\Delta \psi\|_{L^2} 
\\
\lesssim&\, \varepsilon\|\psi_t\|_{H^2(\Omega)}+C(\varepsilon)\left(\|\psi_t\|_{L^2(\Omega)}^{1-d/4}\|\Delta \psi\|_{L^2} \right)^{4/(4-d)}
\end{aligned}
\end{equation}
Applying Young's inequality yields 
\begin{equation}\label{Young_1}
\begin{aligned}
\|\psi_t\|_{L^\infty}^2\|\Delta \psi\|_{L^2}^2 \ds\leq&\, \varepsilon^2 \|\psi_t\|_{H^2(\Omega)}^2
+C(\varepsilon)(E(t))^{1+\kappa} \\ 
\end{aligned}    
\end{equation}
for some $\kappa>0$.  Similarly, we have by Lemma \ref{Lemma_Interpolation}
\begin{equation}\label{Young_2}
\begin{aligned}
\|\nabla\psi\|_{L^4}^2\|\nabla \psi_t\|_{L^4}^2\lesssim &\, \Vert \nabla \psi_t \Vert_{L^2}^{2(1-d/4)}\Vert   \psi_t\Vert_{H^2}^{d/2} \|\Delta \psi\|_{L^2}^2\\  
\leq&\, \varepsilon^2 \|\psi_t\|_{H^2(\Omega)}^2+C(\varepsilon)(E(t))^{1+\kappa} 
\end{aligned}
\end{equation}
Inserting \eqref{Young_1} and \eqref{Young_2} into \eqref{dL_dt}, and selecting $\varepsilon$ small enough leads to
\begin{equation}\label{L_D_Estimate_3}
  \ddt L(t)+ \|\nabla \psi_t\|_{L^2}^2+ \|\Delta \psi_t\|_{L^2}^2+ \|\nabla \psi\|_{L^2}^2+\|\Delta \psi\|_{L^2}^2+\|\psi_{tt}\|_{L^2}^2\lesssim (E(t))^{1+\kappa}. 
\end{equation}
From  the equivalence \eqref{Eq_L_E}, we deduce that there exists a positive constant $\zeta>0$, such that 
\begin{equation}\label{L_diff_Inq}
\ddt L(t)+\zeta L(t)\lesssim (L(t))^{1+\kappa}. 
\end{equation}
By integrating \eqref{L_diff_Inq} with respect to time, we obtain 
\begin{equation}
L(t)\leq c_1 e^{-\zeta t} L(0)+c_2\int_0^t e^{-\zeta(t-s)} (L(s))^{1+\kappa}\ds. 
\end{equation}
Applying Lemma \ref{Lemma_Expo} then with
 \begin{equation}
-\zeta+(1+1/\kappa)c_22^{\kappa}c_1^\kappa L(0)^{\kappa}<0
\end{equation}
gives 
\begin{equation}
L(t)\leq \left(1+\frac{c_2c_1^{\kappa}L(0)^\kappa}{-\zeta \kappa+(1+\kappa)c_2 2^{\kappa}c_1^{\kappa}L(0)^{\kappa}}\right)c_1 e^{-\zeta t}L(0). 
\end{equation}
Finally, employing the equivalence \eqref{Eq_L_E} yields the desired result. 
\end{proof} 
\begin{remark}[On the Kuznetsov equation]
Blackstock's equation can be viewed as an alternative model to the Kuznetsov equation~\cite{kuznetsov1971equations} given by
\begin{equation} \label{Kuznetsov}
	(1+2k \psi_t) \psi_{tt} -c^2 \D \psi -b \D \psi_t +2 \sigma \nabla \psi \cdot \nabla  \psi_t =0. 
\end{equation}
Although the developed theoretical framework can be transferred to \eqref{Kuznetsov} as well, we do not expect a gain in terms of the regularity assumptions compared to the available results in the literature in~\cite{mizohata1993global, kawashima1992global}. The reason is that the right-hand side nonlinearity $f$  in \eqref{f_form} would contain $\psi_t \psi_{tt}$. Then $\|f\|_{L^2 L^2}$ would involve $\|\psi_t \psi_{tt}\|_{L^2L^2}$, which cannot be controlled by $E(t)D(t)$ in their present form.     
 Therefore,  having a higher-order energy functional and assuming $(\psi_0, \psi_1) \in H^3(\Omega)\times H^2(\Omega)$ in the global well-posedness analysis of \eqref{Kuznetsov} seems  necessary within the present framework. We note, however, that \eqref{Kuznetsov} also appears in the pressure (or pressure-velocity) form in the literature, which allows for weaker regularity assumptions on the data; see~\cite{meyer2012global, kaltenbacher2011well, kaltenbacher2012analysis}.   
\end{remark}
 \bibliography{references}{}   
\bibliographystyle{siam}   
  \end{document}